\documentclass[11pt]{amsart}
\usepackage{setspace}
\usepackage{amsmath,amssymb,mathrsfs,color}
\usepackage{verbatim}
\allowdisplaybreaks
\usepackage{hyperref}
\hypersetup{hypertex=true,
	colorlinks=true,
	linkcolor=blue,
	anchorcolor=blue,
	citecolor=blue}
\allowdisplaybreaks

\def\d{{\rm{d}}}

\def\N{{\mathbb N}}
\def\Z{{\mathbb Z}}
\def\R{{\mathbb R}}

\def\P{{\mathbb P}}

\newtheorem{lemma}{Lemma}[section]
\newtheorem{theorem}[lemma]{Theorem}
\newtheorem{remark}[lemma]{Remark}
\newtheorem{prop}[lemma]{Proposition}

\newtheorem{definition}[lemma]{Definition}

\allowdisplaybreaks
\numberwithin{equation}{section}

\newcommand{\enabstractname}{Abstract}
\newcommand{\frabstractname}{摘要}

\allowdisplaybreaks
\begin{document}
\title[Continuity of measure-theoretic entropy for SDEs]
{Continuity of measure-theoretic entropy for stochastic differential equations}

\author{Zhenxin Liu}
\address{Z. Liu: School of Mathematical Sciences, Dalian University of Technology, Dalian 116024, P. R. China}
\email{zxliu@dlut.edu.cn}

\author{Lixin Zhang}
\address{L. Zhang (Corresponding author): School of Mathematical Sciences, Dalian University of Technology, Dalian 116024, P. R. China}
\email{lixinzhang8@hotmail.com}

\date{August 1, 2026}
\subjclass[2020]{28D20, 37A10, 37A35, 60H10.}
\keywords{Measure-theoretic entropy; Continuity of entropy; Stochastic differential equation}

\begin{abstract}
For stochastic differential equations (SDEs), we establish a relationship between the measure-theoretic entropy of the stochastic flow and the rate of volume growth of stable submanifolds under iteration. By combining the result of Kifer and Yomdin \cite{Kifer-Yomdin}, we show that under a suitable integrability condition, for systems with $C^{\infty}$ coefficients, the measure-theoretic entropy is upper semicontinuous with respect to the coefficients of SDEs.
\end{abstract}

\maketitle
\section{Introduction}
Measure-theoretic entropy is a fundamental quantity for quantifying the complexity and chaotic behavior in measure-preserving dynamical systems. It describes the exponential rate at which typical orbits diverge over time. Moreover, it is invariant under measure-theoretic conjugacy and depends only on the underlying dynamical structure, making it an effective tool for distinguishing dynamical systems that are inequivalent in the measure-theoretic sense.

The continuity of entropy is crucial for assessing the structural stability of a system under perturbations. This issue is particularly important for random dynamical systems, where inherent noise and modeling uncertainties naturally lead to perturbations of the underlying system. In such a setting, it is therefore desirable that the entropy remains a well-defined dynamical invariant.

Consider a probability space $(X,\mathcal{B},m)$ and a finite measurable partition $\xi=\left\{A_{1},A_{2},\cdots,A_{k}\right\}$ of $X$. The measure-theoretic entropy of $\xi$ is defined by
\begin{align}
H_{m}(\xi):=-\sum_{i=1}^{k}m(A_{i})\ln m(A_{i}).\notag
\end{align}
Furthermore, let $T:X\rightarrow X$ be a measure-preserving transformation with respect to $m$.
The measure-theoretic entropy of $T$ with respect to $m$ is defined by
\begin{align}
h_{m}(T):=\sup_{\xi\in\mathcal{A}}\lim\limits_{n\rightarrow\infty}\dfrac{1}{n}H_{m}(\bigvee_{i=1}^{n}T^{-i}\xi),\notag
\end{align}
where $\mathcal{A}$ is the set containing all finite measurable partitions of $X$.

Measure-theoretic entropy was introduced by Kolmogorov \cite{Kolmogorov} as an invariant in ergodic theory to quantify the complexity of dynamical systems. Subsequently, Sinai \cite{Sinai} transformed the measure-theoretic entropy from a purely theoretical definition into a computable quantity. In the setting of smooth dynamical systems, Ruelle \cite{Ruelle} established a fundamental inequality demonstrating that the measure-theoretic entropy is bounded above by the sum of all positive Lyapunov exponents. Under appropriate regularity and absolute continuity assumptions, Pesin \cite{Pesin} showed that equality holds.

In general, the entropy map $(T,m)\rightarrow h_{m}(T)$ is not lower semicontinuous, even in the uniformly hyperbolic case. It is straightforward to construct a sequence of periodic systems, each with zero entropy, that converges to a hyperbolic system with positive entropy.
However, Newhouse \cite{Newhouse-1989} proved that for $C^{\infty}$ diffeomorphisms on compact manifolds, the measure-theoretic entropy $h_{m}(T)$ is upper semicontinuous. In the more general setting of $C^{r}$
diffeomorphisms ($r\geq 1$), even upper semicontinuity may fail. Counterexamples were constructed in dimension four by Misiurewicz \cite{Misiurewicz} and in dimension two by Buzzi \cite{Buzzi}. However, even in the absence of upper semicontinuity, the quantity
\begin{align}
\limsup\limits_{(T_{k},m_{k})\rightarrow(T,m)}h_{m_{k}}(T_{k})-h_{m}(T),\qquad k\in \N^{+}\notag
\end{align}
can still be controlled from above via Yomdin theory \cite{Yomdin}.

Consider the following $d$-dimensional SDEs
\begin{align}\label{equ.1.1}
\d X=A(X)\d t+B(X)\d W,\quad X(0)=x\in\R^{d}
\end{align}
and
\begin{equation}\label{equ.1.2}
\d X=A_{k}(X)\d t+B_{k}(X)\d W, \quad X_k(0)=x, \quad k\in\N^{+},
\end{equation}
where $W$ is a $1$-dimensional Brownian motion and $A,B,A_{k},B_{k}: \R^{d}\rightarrow\R^{d}$. Let $\left\{\Phi(x,t,\omega)\right\}_{t\geq 0}$ and $\left\{\Phi_{k}(x,t,\omega)\right\}_{t\geq 0}$ be stochastic flows generated by equations \eqref{equ.1.1} and \eqref{equ.1.2} with invariant measures $\mu$ and $\mu_{k}$, respectively. The time-$t_{0}$-map ($t_{0}\in\R^{+}$) measure-theoretic entropy of $\left\{\Phi(x,t,\omega)\right\}_{t\geq 0}$ is defined as
$$
h_{\mu}^{t_{0}}(\Phi):=\sup_{\xi\in\mathcal{A}}\lim\limits_{n\rightarrow\infty}\frac{1}{n}\int_{\Omega}H_{\mu}(\bigvee_{i=0}^{n-1}\Phi^{-1}(it_{0},\omega)(\xi))\d\P(\omega),
$$
where the supremum is taken over the set of all finite measurable partitions of $\R^{d}$.
It is standard to consider the associated time-1-map measure-theoretic entropy, denoted by $h_{\mu}(\Phi):=h_{\mu}^{1}(\Phi)$.

Arnold \cite{Arnold-system}, Kifer \cite{Kifer}, and other researchers rigorously introduced the measure-theoretic entropy for random dynamical systems. Kifer \cite{Kifer} proved the Ruelle inequality for independent and identically distributed random transformations. For other versions, refer to Liu and Qian \cite{Liupeidong} and Bahnm$\ddot{\text{u}}$ller and Bogensch$\ddot{\text{u}}$tz \cite{Bahnmuller}. Ledrappier and Young \cite{Ledrappier} proved the random version of the Pesin entropy formula. Kifer and Yomdin \cite{Kifer-Yomdin} established a random version of Yomdin's theory \cite{Yomdin}.

In this paper, we investigate the limiting behavior of measure-theoretic entropy for SDEs, focusing on whether
$$
h_{\mu_{k}}(\Phi_{k})\rightarrow h_{\mu}(\Phi)
$$
holds when $A_{k}\rightarrow A$ and $B_{k}\rightarrow B$. We show that, under a suitable integrability condition, if the coefficients are $C^{\infty}$ functions and converge under the $C^{0}$ topology, then the measure-theoretic entropy is upper semicontinuous. Our result extends the upper semicontinuity result of  Newhouse \cite{Newhouse-1989} to the SDE setting. As a corollary, for SDEs driven by additive noise, we provide easily verifiable conditions ensuring the upper semicontinuity of the measure-theoretic entropy.

The paper is organized as follows. In section \ref{sec.2}, we recall some preliminaries related to SDEs and random dynamical systems. In section \ref{sec.3}, we establish the upper semicontinuity of the measure-theoretic entropy for SDEs (see Theorem~\ref{the.3.1}). In Section \ref{sec.4}, we present the continuity of the entropy under the Lyapunov function condition (see Theorem~\ref{the.4.2}) and apply our results to SDEs driven by additive noise (see Theorem~\ref{the.4.1}).

\section{Preliminaries}\label{sec.2}
Firstly, let us introduce several norms. For any vector $v=(a_{1},a_{2},\dots,a_{d})\in \R^{d}$, we denote the Euclidean norm by $\vert v\vert:=\sqrt{\sum_{i=1}^{d}\vert a_{i}\vert^{2}}$. Given a matrix  $A=(v_{1},v_{2},\dots,v_{d})\in \R^{d\times d}$, the matrix norm $\vert A\vert$  is defined as $\vert A\vert:=\max\limits_{1\leq i\leq d}\left\{\vert v_{i}\vert\right\}$.  Furthermore, the operator norm $\Vert A\Vert$ is given by
$$
\Vert A\Vert:=\sup_{\substack{v\in \R^{d} \\ v\neq 0}}\frac{\vert Av\vert}{\vert v\vert}.
$$
Note that there exists a constant $C$ such that for any $A\in \R^{d\times d}$,
\begin{equation}
C^{-1}\vert A\vert\leq\Vert A\Vert\leq C\vert A\vert.\notag
\end{equation}
We use $C^{r}(\R^{d})(0\leq r\leq\infty)$ to denote the space of functions with continuous $r$-th order derivatives on $\R^{d}$ and $C_{b}^{r}(\R^{d})$ the space of functions with continuous and bounded $i$-th ($0\le i \le r$) order derivatives. For any $\psi\in C_{b}^{r}(\R^{d})$, the norm is given by
$$
\Vert\psi\Vert_{C^{r}}:=\max\left\{\sup_{x\in \R^{d}}\left\{\vert\psi(x)\vert\right\}, \sup_{1\leq i\leq r}\sup_{x\in \R^{d}}\left\{\vert D ^{i}\psi(x)\vert\right\}\right\}.
$$
A function $f$ on $\R^{d}$ is called $C^{1,\alpha}$ ($0<\alpha\leq1$) if it is $1$-th continuously differentiable and the $1$-th derivatives
are $\alpha$-H$\ddot{\mathrm{o}}$lder continuous.

Consider the SDE \eqref{equ.1.1}. It is well-known that when $A,B$ are  Lipschitz and satisfy the linear growth condition, \eqref{equ.1.1} admits a unique solution $\left\{\Phi(x,t,\omega)\right\}_{t\geq 0,\omega\in\Omega}$; see e.g. Arnold \cite[p. 105]{Arnold-equation}.
Kunita \cite[p. 209]{Kunita} introduced the concept of stochastic flows,  which capture the ``time-variable'' orbital motion. As we know, the solution of SDE \eqref{equ.1.1} corresponds to a one-parameter stochastic flow (see the following Definition \ref{def.2.1}).

To prevent an overuse of notation, we adopt the following convention throughout this paper. For the solution $\{\Phi(x,t,\omega)\}_{t\geq 0,\omega\in\Omega}$ of \eqref{equ.1.1}, we also write $\{\Phi(x,t)\}_{t\geq 0}$ for simplicity as normal. If the dependence on $x$ is omitted, it represents a one-parameter stochastic flow $\{\Phi(t,\omega)\}_{t\ge0,\omega\in\Omega}$ with $\Phi(t,\omega)(\cdot)=\Phi(\cdot,t,\omega)$. If the dependence on $t$ is omitted, it refers to the iteration of $x$ under the time-$1$-map, i.e. $\Phi(x,\omega)=\Phi(x,1,\omega)$. If both $x$ and $t$ are omitted, then it denotes the random diffeomorphism generated by the time-$1$-map, i.e. $\Phi(\omega)(\cdot)=\Phi(\cdot,1,\omega)$. The same convention also applies to $\partial\Phi$ and $D\Phi$, similarly for the the solution $\Phi_k$ of \eqref{equ.1.2}.

\begin{definition}[Stochastic flow]\label{def.2.1}\rm
A {\em  one-parameter stochastic flow of homeomorphisms} on the probability space $(\Omega,\mathcal{F},\P)$ with the state space $\R^{d}$ is a
family of continuous maps $\left\{\Phi(t,\omega)| t\geq 0, \omega\in\Omega \right\}$ which satisfies the following properties for any $\omega$ from a subset $\Omega'\subset\Omega$ of full $\P$-measure:
\begin{enumerate}
\item[1.]
$\Phi(s+t,\omega)=\Phi(t,\theta(s)\omega)\circ\Phi(s,\omega)$ holds for all $s,t\geq 0$.
\item[2.]
The map $\Phi(t,\omega):\R^{d}\rightarrow \R^{d}$ is a  homeomorphism for all $t\geq 0$.
\end{enumerate}
\end{definition}

We can get more smoothness of the solution under additional smoothness assumption of the coefficients. We denote the Jacobian matrix of solution  $\left\{\Phi(x,t)\right\}_{t\geq 0}$ as $D\Phi(x,t)=(\partial_{1} \Phi(x,t),\partial_{2} \Phi(x,t),\cdots,\partial_{d}\Phi(x,t))$ where the partial derivatives $\partial_{l} \Phi(x,t)$, $1\leq l\leq d$ have the following conclusion (see e.g. Kunita \cite[p. 218]{Kunita}):

\begin{prop}\label{lem.2.1}
Suppose that coefficients $A,B$ in SDE \eqref{equ.1.1} are $C^{1,\alpha}$ functions for some $0<\alpha\leq1$, and $\vert DA\vert,\vert DB\vert$ are bounded by a constant $L$. Then the solution $\left\{\Phi(x,t)\right\}_{t\geq 0}$ is a $C^{1,\alpha}$ function of $x$ and for $1\leq l\leq d$ the derivative $\partial_{l}\Phi(x,t)=\frac{\partial \Phi(x,t)}{\partial x_{l}}$ satisfies the equation
\begin{equation}
\partial_{l} \Phi(x,t)=e_{l}+\int_{0}^{t}DA(\Phi(x,s))\partial_{l} \Phi(x,s)\d s+\int_{0}^{t}DB(\Phi(x,s))\partial_{l} \Phi(x,s)\d W \quad a.s.,\notag
\end{equation}
where $DA(x),DB(x)$ are the Jacobian matrices of $A(x),B(x)$ and $e_{l}$ is the unit vector $(0,\ldots,0,$ $1,0,\ldots,0)$ with $1$ as the $l$-th component.
\end{prop}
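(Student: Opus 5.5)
The plan follows Kunita's approach: first prove that the solution is differentiable in $x$ in $L^p$, then upgrade to pathwise $C^{1,\alpha}$ regularity through Kolmogorov's continuity theorem. Fix $l$ and write $h\neq 0$ for a small real number. Consider the difference quotient
$$
\eta_h(x,t):=\frac{\Phi(x+he_l,t)-\Phi(x,t)}{h}.
$$
Since $A,B\in C^{1}$, the mean value theorem in integral form gives that $\eta_h$ solves the linear SDE
$$
\eta_h(t)=e_l+\int_0^t \mathcal{A}_h(s)\eta_h(s)\,\d s+\int_0^t\mathcal{B}_h(s)\eta_h(s)\,\d W,
$$
where
$$
\mathcal{A}_h(s)=\int_0^1 DA\bigl(\Phi(x,s)+\lambda(\Phi(x+he_l,s)-\Phi(x,s))\bigr)\d\lambda,
$$
and $\mathcal{B}_h$ is defined in the same way from $DB$. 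By assumption $\Vert\mathcal{A}_h\Vert,\Vert\mathcal{B}_h\Vert\le CL$. The Burkholder--Davis--Gundy inequality together with Gronwall's lemma then gives
$$
\sup_h \mathbb{E}\sup_{s\le T}\vert\eta_h(s)\vert^p<\infty
$$
for every $p\ge2$. The same argument applied to $\Phi$ itself gives the standard estimate
$$
\mathbb{E}\sup_{s\le T}\vert\Phi(x,s)-\Phi(y,s)\vert^p\le C\vert x-y\vert^p.
$$

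Next let $\eta(x,t)$ denote the unique solution of the linear equation in the statement. This solution exists because the coefficients $DA(\Phi(x,s))$ and $DB(\Phi(x,s))$ are bounded and adapted. I subtract the two equations and write
$$
\mathcal{A}_h\eta_h-DA(\Phi)\eta=\mathcal{A}_h(\eta_h-\eta)+(\mathcal{A}_h-DA(\Phi))\eta,
$$
and similarly for the diffusion term. The $\alpha$-Hölder continuity of $DA$ and $DB$ gives
$$
\vert\mathcal{A}_h(s)-DA(\Phi(x,s))\vert\le C\vert\Phi(x+he_l,s)-\Phi(x,s)\vert^\alpha .
$$
Hölder's inequality, the moment bounds above, BDG and Gronwall then yield
$$
\mathbb{E}\sup_{s\le T}\vert\eta_h-\eta\vert^p\le C\vert h\vert^{\alpha p}.
$$
Applying the same argument to $\eta_h(x)-\eta_{h'}(x')$ gives the two-parameter bound
$$
\mathbb{E}\vert\eta_h(x,t)-\eta_{h'}(x',t)\vert^p\le C\bigl(\vert x-x'\vert+\vert h-h'\vert\bigr)^{\alpha p}
$$
for $x,x'$ in a compact set.

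The hard step is turning these $L^p$ statements into an almost-sure statement. Choose $p$ large enough that $\alpha p>d+1$. By Kolmogorov's continuity theorem in the $(d+1)$ variables $(x,h)$, the family $(x,h)\mapsto\eta_h(x,t)$ has a version that is continuous, including at $h=0$. Continuity in $t$ can be included by also adding the time variable, using the standard increment estimate in $t$. This shows that $\partial_l\Phi(x,t)$ exists almost surely, equals $\eta(x,t)$, is continuous, and satisfies the stated integral equation almost surely. For the Hölder regularity, Kolmogorov's theorem again gives a modification of $x\mapsto\eta(x,t)$ that is $\beta$-Hölder on compact sets for every $\beta<\alpha-(d+1)/p$. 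Since $p$ is arbitrary, $\eta$ is locally $\beta$-Hölder for every $\beta<\alpha$. Obtaining the exponent exactly $\alpha$ would require more care, and I would follow Kunita's convention at that point. I would also take care that one fixed null set serves for all $x$ simultaneously. This holds because the Kolmogorov modification is chosen globally on compact sets, and $\R^d$ is exhausted by countably many balls.
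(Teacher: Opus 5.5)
Your argument is the standard one from Kunita's book. That is all the paper offers here, since it cites Kunita instead of giving a proof. The steps you list are sound:
\begin{itemize}
\item the linear equation for the difference quotient;
\item the uniform $L^p$ bounds via BDG and Gronwall;
\item the $|h|^{\alpha}$ and two-parameter estimates from the H\"older continuity of $DA,DB$;
\item Kolmogorov in $(x,h)$ and $t$.
\end{itemize}

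The one point you should settle rather than defer is the exponent, and ``more care'' cannot settle it. Kolmogorov's criterion never reaches the endpoint exponent, and Kunita's theorem accordingly only asserts a $C^{1,\beta}$ flow for every $\beta<\alpha$. Moreover, for $\alpha<1$ with multiplicative noise the endpoint genuinely fails. For instance, take $d=2$, $A\equiv(1,0)^{\intercal}$ and $B(x)=(0,f(x_1))^{\intercal}$ with $f'(z)=\sum_{n\ge0}2^{-n\alpha}\cos(2^nz)$. All hypotheses hold, $\Phi^1(x,s)=x_1+s$, and
\[
\partial_1\Phi^2(x,2\pi)=\int_0^{2\pi}f'(x_1+s)\,\d W_s=\sum_{n\ge0}2^{-n\alpha}\bigl(\xi_n\cos(2^nx_1)-\zeta_n\sin(2^nx_1)\bigr),
\]
with $\xi_n,\zeta_n$ i.i.d.\ $N(0,\pi)$. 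Take $h=\pi2^{-N}$ and look at the increment:
\begin{itemize}
\item every term with $n>N$ drops out exactly;
\item the $N$-th term contributes $2^{1-N\alpha}(\xi_N^2+\zeta_N^2)^{1/2}$ at a suitable point of any given interval;
\item the terms $n<N$ are $O(2^{-N\alpha})$ times a tight quantity independent of $(\xi_N,\zeta_N)$.
\end{itemize}
A Borel--Cantelli and $0$--$1$ law argument then shows that this Gaussian field, much like fractional Brownian motion of index $\alpha$, is a.s.\ not $\alpha$-H\"older on any interval.

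So what your proof actually establishes is that $\Phi(\cdot,t)\in C^{1,\beta}$ for every $\beta<\alpha$, with $\partial_l\Phi$ solving the linearized equation. That is the correct form of the proposition and exactly what the citation supports, so state it that way rather than appealing to a ``convention''. The literal $C^{1,\alpha}$ conclusion does hold in special cases, e.g.\ the additive-noise setting of Section~\ref{sec.4}. There the derivative equation is a pathwise ODE, and Gronwall gives the exponent $\alpha$ with a deterministic constant.
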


As discussed in Kunita \cite[p. 228]{Kunita}, the inverse $(D\Phi)^{-1}(x,t)$ of the Jacobian $D\Phi(x,t)$ satisfies the following equation
\begin{align*}
(D \Phi)^{-1}(x,t)=I-&\int_{0}^{t}DA(\Phi(x,s))(D \Phi)^{-1}(x,s)\d s-\int_{0}^{t}DB(\Phi(x,s))(D \Phi)^{-1}(x,s)\d W\\
-&\int_{0}^{t}(DB(\Phi(x,s)))^{2}(D \Phi)^{-1}(x,s)\d s,
\end{align*}
where $I$ is the identity matrix.

\begin{definition}[Invariant measure]\rm
A Borel probability measure $\mu$ on $\R^{d}$ is called an {\em invariant measure} or {\em stationary distribution} of Markov process $\left\{\Phi(x,t)\right\}_{t\geq 0}$ if  $P_{t}^{\ast}\mu=\mu$ for all $t\geq 0$, where $\left\{P_{t}^{\ast}\right\}_{t\geq0}$ is the Markov semigroup generated by $\left\{\Phi(x,t)\right\}_{t\geq 0}$.
\end{definition}

Next, we state the results about the relationship between the solution $\left\{\Phi(x,t)\right\}_{t\geq 0}$ and its invariant measure $\mu$. The results regarding invariant measures and ergodic measures are established separately by Ohno \cite{Ohno} and Kakutani \cite{Kakutani}, and can be found in Kifer \cite[p. 18]{Kifer}.
\begin{prop}
A probability measure $\mu$ is $P_{t}^{\ast}$-invariant (respectively, ergodic) if and only if $\mu\times\P$ is $F$-invariant (respectively, ergodic), where $F(x,\omega):=(\Phi(x,\omega),\theta\omega)$.
\end{prop}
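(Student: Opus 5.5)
We argue both directions through the skew product $F(x,\omega)=(\Phi(x,\omega),\theta\omega)$ on $\R^{d}\times\Omega$, using that $\theta$ preserves $\P$ and that the time-$1$ maps $\Phi(\cdot,1,\omega)$ are driven by the Brownian increments on $[0,1]$. These increments are independent of $\mathcal{F}_0=\sigma\{W(s):s\le 0\}$, and $\Phi(1,\theta^{n}\omega)$ depends only on increments on $[n,n+1]$. The key identity is the Markov/independence one: for bounded measurable $f$ on $\R^{d}$ and $g$ on $\Omega$,
\[
\int f(\Phi(x,\omega))\,g(\theta\omega)\,\d(\mu\times\P)=\int\!\!\int P_1 f\cdot g\,\d\mu\,\d\P
\]
whenever $g$ is measurable with respect to the ``past'' $\sigma$-algebra. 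To make this usable, we would first reduce to the one-sided setting: replace $\Omega$ by the canonical path space of increments, so that $\theta$ is a one-sided shift. Then $\omega\mapsto\Phi(1,\omega)$ is measurable with respect to the first coordinate, and $\theta\omega$ is independent of it.

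For invariance, we take $f\otimes g$ and compute $\int f(\Phi(x,\omega))g(\theta\omega)\,\d\mu(x)\,\d\P(\omega)$. Because $\Phi(1,\omega)$ and $\theta\omega$ are independent under $\P$, this factors as $\int P_1f\,\d\mu\cdot\int g\,\d\P$. Since $\theta$ preserves $\P$, it equals $\int f\,\d\mu\int g\,\d\P$ if and only if $\int P_1 f\,\d\mu=\int f\,\d\mu$, that is, $P_1^\ast\mu=\mu$. A monotone class argument then upgrades this from products to all of $\mathcal{B}(\R^{d})\otimes\mathcal{F}$. Invariance for all $t$ follows from invariance at $t=1$ only after applying the same argument with time-$t$ maps. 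Alternatively, we read the proposition for the discrete-time family $P_1$, which is how it is used for the time-$1$ entropy.

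For ergodicity, suppose first that $\mu\times\P$ is $F$-ergodic. If $E\subset\R^{d}$ satisfies $P_1\mathbf{1}_E=\mathbf{1}_E$ $\mu$-a.e., then $E\times\Omega$ is $F$-invariant mod $0$, since $\mu\times\P(F^{-1}(E\times\Omega)\triangle(E\times\Omega))=2\int\mathbf{1}_E(1-P_1\mathbf{1}_E)\,\d\mu=0$. Hence $\mu(E)\in\{0,1\}$. For the converse, we take an $F$-invariant $h\in L^2(\mu\times\P)$ and set $\bar h(x)=\int h(x,\omega)\,\d\P$. Invariance together with the independence structure gives $P_1\bar h=\bar h$, so $\bar h$ is constant by ergodicity of $\mu$.

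The \textbf{main obstacle} is showing that $h$ itself does not depend on $\omega$. We would use the Kakutani random ergodic theorem route: decompose $h$ along the filtration $\mathcal{F}_{[0,n]}$ and use a martingale convergence argument. Invariance $h=h\circ F^n$ forces $h$ to be measurable with respect to the tail of the future noise mixed with $x$. Since the shift on i.i.d.\ noise is exact (Kolmogorov $0$–$1$ law), that tail is trivial, and so $h(x,\omega)=\bar h(x)$ a.e. Because this is the classical Ohno/Kakutani result, we would cite Kifer [p. 18] for this step rather than reproduce it fully.
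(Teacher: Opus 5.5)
The paper gives no argument of its own here; it attributes the statement to Ohno and Kakutani via Kifer [p.~18]. So deferring the hard direction to that reference matches the paper, and the parts you do prove are correct. These are the factorization into $\int P_1f\,\d\mu\int g\,\d\P$ for invariance, and the computation $\mu\times\P\bigl(F^{-1}(E\times\Omega)\triangle(E\times\Omega)\bigr)=2\int\mathbf{1}_E(1-P_1\mathbf{1}_E)\,\d\mu$ for ``$\mu\times\P$ ergodic $\Rightarrow$ $\mu$ ergodic''. You are also right to insist on the one-sided shift and on reading the statement for $P_1$. With the two-sided Wiener shift, $\mu\times\P$ is typically not $F$-invariant: for the Ornstein--Uhlenbeck process, $\Phi(X,\omega)$ is correlated with $W(1)-W(0)$, which is part of $\theta\omega$. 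And $F$ only sees the time-one map. One slip in your opening identity: $g$ must be measurable with respect to the \emph{future} increments $\sigma\{W(s)-W(0):s\ge 0\}$. If $g$ depends on the past, then $g(\theta\omega)$ depends on the increments on $[0,1]$, and the factorization fails.

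The genuine gap is your sketch of ``$\mu$ ergodic $\Rightarrow$ $\mu\times\P$ ergodic''. From $h=h\circ F^n$ you only get that $h$ is measurable with respect to $F^{-n}(\mathcal{B}(\R^d)\otimes\mathcal{F})=\sigma(\Phi^n(x,\omega))\vee\sigma(\theta^n\omega)$. Since $\Phi^n(x,\omega)$ uses the noise on $[0,n]$, this is not $\mathcal{B}(\R^d)\otimes\sigma(\theta^n\omega)$. Moreover, $\bigcap_n F^{-n}(\mathcal{B}(\R^d)\otimes\mathcal{F})$ is not trivial in general. For a deterministic flow ($B=0$) it contains every $E\times\Omega=F^{-n}(\Phi^n(E)\times\Omega)$, even though $\mu\times\P$ is then ergodic whenever $\mu$ is. So the Kolmogorov $0$--$1$ law for the noise is not what closes the argument. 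What closes it is the forward martingale you allude to, together with an identity you did not state. By invariance and the independence of $\theta^n\omega$ from the increments on $[0,n]$,
\[
E\bigl[h\,\big|\,\mathcal{B}(\R^d)\otimes\mathcal{F}_{[0,n]}\bigr](x,\omega)=\int_\Omega h(\Phi^n(x,\omega),\omega')\,\d\P(\omega')=\bar h(\Phi^n(x,\omega)).
\]
Since $\bar h=c$ $\mu$-a.e., and $(x,\omega)\mapsto\Phi^n(x,\omega)$ pushes $\mu\times\P$ forward to $P_n^\ast\mu=\mu$, the right-hand side equals $c$ a.e.\ for every $n$. Because $\mathcal{B}(\R^d)\otimes\mathcal{F}_{[0,n]}$ increases to $\mathcal{B}(\R^d)\otimes\mathcal{F}$, martingale convergence then gives $h=c$ a.e. With this replacement your argument is complete and self-contained, which is more than the bare citation in the paper provides.
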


In this paper, we assume that equation \eqref{equ.1.1} possesses an invariant measure, denoted by $\mu$. Additionally, denote the invariant measures corresponding to SDEs \eqref{equ.1.2} by $\mu_{k},k\in\N^{+}$. As is well-known, $\mu_{k}$ is said to converge to $\mu$ under the $\text{weak}^{\ast}$-topology if for any $\phi\in C_{b}(\R^{d})$,
$$
\lim_{k\rightarrow\infty}\int_{\R^{d}}\phi(x)\d\mu_{k}=\int_{\R^{d}}\phi(x)\d\mu,
$$
where $C_{b}(\R^{d})$ is the set of all continuous bounded functions defined on $\R^{d}$.

\begin{definition}[Measure-theoretic entropy of stochastic flows]\rm
Let \{$\Phi(t,\omega),t\geq0$\} be a one-parameter stochastic flow with an invariant measure $\mu$.
And $\xi$ is a finite measurable partition of $\R^{d}$. Denote the \textit{time-$t_{0}$-map measure-theoretic entropy of $\Phi$ with respect to $\xi$} as
\begin{align}\label{equ.2.2}
h^{t_{0}}_{\mu}(\Phi,\xi):=\lim\limits_{n\rightarrow\infty}\dfrac{1}{n}\int_{\Omega}H_{\mu}\Big(\bigvee_{i=0}^{n-1}\Phi^{-1}(it_{0},\omega)\xi\Big)\d\P(\omega).
\end{align}
Then 
$$
h_{\mu}^{t_{0}}(\Phi):=\sup_{\xi\in\mathcal{A}}h^{t_{0}}_{\mu}(\Phi,\xi),
$$
where the supremum is taken over the set of all finite measurable partitions of $\R^{d}$,
is called the \textit{time-$t_{0}$-map measure-theoretic entropy of $\Phi$}.
\end{definition}
\begin{remark}\rm\label{rem.2.6}
\begin{itemize}
\item [1.] By the Kingman subadditive ergodic theorem, we know that the limit \eqref{equ.2.2} exists and satisfies
\begin{align}
h^{t_{0}}_{\mu}(\Phi,\xi)=\inf_{n\geq1}\dfrac{1}{n}\int_{\Omega}H_{\mu}\Big(\bigvee_{i=0}^{n-1}\Phi^{-1}(it_{0},\omega)\xi\Big)\d\P(\omega).\notag
\end{align}
\item [2.] For any $t_{0}\geq0$, the entropy satisfies the relations
\begin{align}
h_{\mu}^{t_{0}}(\Phi)=h_{\mu}^{t_{0}}(\Phi^{-1}),\qquad h_{\mu}^{t_{0}}(\Phi)=t_{0}h_{\mu}^{1}(\Phi).\notag
\end{align}
\end{itemize}
\end{remark}

For solution $\left\{\Phi(x,t)\right\}_{t\geq 0}$ with the invariant measure $\mu$,  the multiplicative ergodic theorem, as stated by Liu and Qian \cite[p. 119]{Liupeidong2}, provides insights into its Lyapunov exponents:
\begin{prop}\label{lya-non}
If solution $\left\{\Phi(x,t,\omega)\right\}_{t\geq 0}$ has the following property
\begin{equation}\label{equ.2.3}
\int_{\R^{d}\times\Omega}\log^{+}\Vert D\Phi(x,1,\omega)\Vert+\log^{+}\Vert D\Phi^{-1}(x,1,\omega)\Vert\d\mu\times\P< +\infty,
\end{equation}
then there exists a subset $\Gamma\subset \R^{d}\times\Omega$ with $\mu\times\P(\Gamma)=1$ such that for any $(x,\omega)\in\Gamma$, there is an integer $p(x)$ and exists a sequence $\left\{ E_{i}(x,\omega)\right\}_{1\leq i\leq p(x)}$ of linear subspaces of $\R^{d}$:
$$
E(x,\omega)=	E_{1}(x,\omega)\oplus\cdots\oplus E_{p(x)}(x,\omega)\cong\R^{d}
$$
and numbers independent of $\omega\in\Omega$
$$
\lambda_{1}(x)>\lambda_{2}(x)>\cdots>\lambda_{p(x)}(x),
$$
such that for any $v\in E_{i}(x,\omega)$
$$	
\lim\limits_{t \rightarrow \infty}\dfrac{1}{t}\ln\vert D\Phi(x,t,\omega)v\vert=:\lambda_{i}(x),\quad 1\leq i\leq p(x).
$$
We call $E_{i}(x,\omega),1\leq i\leq p(x)$ the \textup{Oseledets subspaces} and the $m_{i}(x)=\textup{dim}E_{i}(x,\omega)$ the \textup{multiplicity} of the corresponding \textup{Lyapunov exponent} $\lambda_{i}(x)$.
\end{prop}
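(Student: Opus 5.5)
We will prove the statement by reducing it to the multiplicative ergodic theorem for the skew-product cocycle over $F(x,\omega)=(\Phi(x,\omega),\theta\omega)$ on $(\R^{d}\times\Omega,\mu\times\P)$. By the preceding proposition, $\mu\times\P$ is $F$-invariant. The derivative cocycle $D\Phi(x,n,\omega)=D\Phi(\Phi(x,n-1,\omega),\theta^{n-1}\omega)\cdots D\Phi(x,\omega)$ is a measurable linear cocycle over $F$ by the flow property in Definition \ref{def.2.1} and the chain rule, using Proposition \ref{lem.2.1} for differentiability. The integrability condition \eqref{equ.2.3} is exactly the $\log^{+}$ hypothesis of Oseledets' theorem for invertible cocycles: invertibility of $D\Phi$ comes from the SDE satisfied by $(D\Phi)^{-1}$. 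Applying the invertible Oseledets theorem on $\R^{d}\times\Omega$ at integer times gives a full-measure set $\Gamma_{0}$. On $\Gamma_{0}$ we obtain a measurable splitting into $E_{i}(x,\omega)$ and exponents $\lambda_{i}(x,\omega)$ with multiplicities $m_{i}(x,\omega)$, all $F$-invariant, such that $\frac1n\ln|D\Phi(x,n,\omega)v|\to\lambda_{i}$ for $v\in E_{i}\setminus\{0\}$.

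The second step passes from integer times to continuous $t\to\infty$. For $t\in[n,n+1]$ we write $D\Phi(x,t,\omega)=D\Phi(\Phi(x,n,\omega),t-n,\theta^{n}\omega)\,D\Phi(x,n,\omega)$. It then suffices to show that
$$
\frac1n\sup_{0\le s\le1}\ln^{+}\Vert D\Phi(\cdot,s,\cdot)^{\pm1}\Vert\circ F^{n}\to0
$$
almost surely. By the Borel--Cantelli lemma and stationarity, this follows if $\sup_{0\le s\le 1}\ln^{+}\Vert D\Phi(x,s,\omega)^{\pm1}\Vert$ is $\mu\times\P$-integrable. Here I expect the main obstacle, since \eqref{equ.2.3} only controls time $1$. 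I would handle it with the bound $|DA|,|DB|\le L$ from Proposition \ref{lem.2.1}: Itô's formula applied to $\ln|\partial_{l}\Phi|^{2}$ and to the inverse equation, together with the Burkholder--Davis--Gundy inequality, give moment bounds $\mathbb{E}\sup_{s\le1}\Vert D\Phi(x,s)\Vert^{p}\le C(L,p)$ uniformly in $x$. The same bounds hold for the inverse. In particular, the $\sup\ln^{+}$ is integrable uniformly in $x$, hence with respect to $\mu\times\P$.

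Finally, we show that the $\lambda_{i}$ are independent of $\omega$ and depend only on $x$. Since $\lambda_{i}$ is $F$-invariant, this is an ergodic-decomposition statement: decompose $\mu$ into ergodic stationary measures. For each ergodic component $\mu_{e}$, $\mu_{e}\times\P$ is $F$-ergodic by the preceding proposition, so the exponents are constant there. Measurably selecting the component containing $x$ defines $\lambda_{i}(x)$, $p(x)$ and $m_{i}(x)$. Alternatively, one can show that $\int\lambda_{i}(x,\omega)\,\d\P(\omega)$ serves as the $x$-dependent version, using that $\lambda_{i}(x,\cdot)$ is measurable with respect to the future of the noise yet invariant under the shift, and hence trivial by the $0$-$1$ law. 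Intersecting the full-measure sets gives $\Gamma$.
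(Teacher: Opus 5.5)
Your overall route is the standard argument behind the Liu--Qian multiplicative ergodic theorem, which the paper cites without proof. It has three parts: Oseledets on the skew product $F$, Borel--Cantelli interpolation between integer times, and $\omega$-independence of the exponents via ergodic decomposition. \textbf{The first step fails as written.} The invertible Oseledets theorem needs $F$ to be both invertible \emph{and} $\mu\times\P$-preserving, and here these cannot hold together. Invertibility of the matrices $D\Phi$ does not help. The invariance you take from the preceding proposition holds on the one-sided noise space, where $\theta$, and hence $F$, is not invertible. On two-sided path space $\theta$ is invertible, but then $\mu\times\P$ is not $F$-invariant. The reason is that $\Phi(x,1,\omega)$ is built from the increments of $W$ on $[0,1]$, and these become the past of $\theta\omega$. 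For example, for $\d X=-X\,\d t+\d W$, the quantities $\Phi(x,1,\omega)$ and $(\theta\omega)(-1)=-W_{1}$ have covariance $-(1-e^{-1})\neq0$ under $\mu\times\P$. Under the product measure, by contrast, the state coordinate is independent of the path. The invariant measure on the two-sided space is $\mu_{\omega}(\d x)\P(\d\omega)$, and a set of full measure for it need not have full $\mu\times\P$-measure.

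\textbf{The repair is easy}, because the statement only concerns $t\to+\infty$ and does not ask for equivariance of the $E_{i}$.
\begin{itemize}
\item[1.] Apply the one-sided (filtration) multiplicative ergodic theorem to the non-invertible $F$, using $\ln^{+}\Vert D\Phi(x,1,\omega)\Vert\in L^{1}(\mu\times\P)$. This gives a measurable filtration $\R^{d}=V_{1}\supsetneq\cdots\supsetneq V_{p}\supsetneq V_{p+1}=\{0\}$ with $\frac1n\ln|D\Phi^{n}(x,\omega)v|\to\lambda_{i}$ for $v\in V_{i}\setminus V_{i+1}$. The $\ln^{+}\Vert D\Phi^{-1}\Vert$ part of \eqref{equ.2.3} gives $\lambda_{p}>-\infty$.
\item[2.] Set $E_{i}:=V_{i}\cap V_{i+1}^{\perp}$. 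Every nonzero $v\in E_{i}$ lies in $V_{i}\setminus V_{i+1}$, so $E_{1}\oplus\cdots\oplus E_{p}$ is the required decomposition. A genuinely equivariant splitting with limits as $n\to\pm\infty$, as Lemma \ref{lem.3.4} later uses, would require the natural extension with $\mu_{\omega}(\d x)\P(\d\omega)$.
\item[3.] Your interpolation step then goes through unchanged, since Borel--Cantelli only uses $F$-invariance of $\mu\times\P$ for the one-sided $F$. State explicitly that it uses $|DA|,|DB|\le L$, which is not a hypothesis of the statement. You are right that \eqref{equ.2.3} alone does not control $\sup_{0\le s\le1}$, so you are proving the result under the paper's standing assumption.
\end{itemize}

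Your ergodic-decomposition route to $\omega$-independence is fine. The ``$0$--$1$ law'' alternative is wrong as phrased: for fixed $x$, $\lambda_{i}(x,\cdot)$ is not $\theta$-invariant, since the invariance is under $F$, which moves $x$. The correct version is a martingale argument. Take a bounded $F$-invariant $f$, such as a truncated exponent, and put $\bar f(x):=\int f(x,\omega)\,\d\P(\omega)$. Independence of $\omega|_{[0,n]}$ and $\theta^{n}\omega$ gives $\mathbb{E}[f(x,\cdot)\mid\sigma(\omega|_{[0,n]})]=\bar f(\Phi^{n}(x,\omega))\to f(x,\omega)$. Stationarity of $\mu$ together with $P_{1}\bar f=\bar f$ then forces $\bar f(\Phi^{n}(x,\omega))=\bar f(x)$ almost surely.
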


\begin{remark} \rm
\begin{enumerate}
\item[(1)] If coefficients $A,B$ of equation \eqref{equ.1.1} are $C^{1}$ functions and their derivatives are bounded, then it is immediate to see that the Jacobian matrix $D\Phi(x,t,\omega)$  for equation \eqref{equ.1.1} satisfies the condition \eqref{equ.2.3}; see Lemma \ref{lem.4.2}.
\item[(2)]	If $\mu$ is an ergodic measure, then $ \lambda_{i}(x), p(x),m_{i}(x)$  is independent of $x$.
\item[(3)]  The Lyapunov exponents obtained from continuous-time systems are equivalent to those obtained from discrete-time systems, as expressed by the equality:
$$
\lim\limits_{t \rightarrow \infty}\dfrac{1}{t}\ln\vert D\Phi(x,t,\omega)v\vert=\lim\limits_{n \rightarrow \infty}\dfrac{1}{n}\ln\vert D\Phi(x,n,\omega)v\vert,\quad t\in\R^{+},\quad n\in \N^{+}
$$
for any $v\in \R^{d}$ and almost all $\omega$.
\end{enumerate}
\end{remark}

\section{Continuity of measure-theoretic entropy}\label{sec.3}

As usual, we consider the time-$1$-map measure-theoretic entropy of solution $\{\Phi(x,t)\}_{t\geq 0}$, which is denoted by $h_{\mu}(\Phi):=h_{\mu}^{1}(\Phi)$.
So the measure-theoretic entropy $h_{\mu}(\Phi)$ can be expressed as
\begin{align}
h_{\mu}(\Phi)=\sup_{\xi}\lim\limits_{n\rightarrow\infty}\dfrac{1}{n}\int_{\Omega}H_{\mu}\Big(\bigvee_{i=0}^{n-1}\Phi^{-1}(i,\omega)\xi\Big)\d\P(\omega).\notag
\end{align}
Our objective is to determine under what conditions $h_{\mu}(\Phi)$ is upper semicontinuous with respect to coefficients of equations \eqref{equ.1.1} and \eqref{equ.1.2}.

First, we present some auxiliary results related to SDEs which will be used later. Since the results are elementary, we omit their proof.
\begin{lemma}\label{lem.4.2}
For equation \eqref{equ.1.1}, let coefficients $A,B$ belong to $ C^{1}(\R^{d})$, $\vert DA\vert,\vert DB\vert$ be bounded by a constant $K$.
Then for any $x,y\in\R^{d}$ and $t\in[0,T]$ with $T\in[0,+\infty)$, the solution $\Phi(x,t)$ satisfies
$$
E\vert \Phi(x,t)-\Phi(y,t)\vert^{2}\leq C\vert x-y\vert^{2},
$$
where $C=C(K,T)>0$ is a constant dependent only on $K,T$.
The Jacobian $D\Phi=(\partial_{1}\Phi,\partial_{2}\Phi,\dots,\partial_{d}\Phi)$ exists and satisfies  that for any $x\in\R^{d}$, $t\in [0,T]$ and $p \geq 2$
$$
E\vert D\Phi(x,t)\vert^{p}\leq C,\qquad E\vert D\Phi^{-1}(x,t)\vert^{p}\leq C.
$$
Here $C=C(p,K,T)>0$ is a constant. Furthermore, if $A,B$ are $C^{1,1}$ functions and their Lipschitz constants are bounded by $L$, then for any $t\in[0,T]$
\begin{align}
E\vert D\Phi(x,t)-D\Phi(y,t)\vert^{2}\leq C\vert x-y\vert^{2},\notag
\end{align}
where $C=C(K,L,T)>0$ is a constant.
\end{lemma}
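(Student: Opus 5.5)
The plan is to prove all three estimates by the standard route for SDE moment bounds: write the quantity of interest as the solution of a linear (or Lipschitz) SDE, apply It\^o's formula or the Burkholder--Davis--Gundy inequality together with H\"older's inequality, and close with Gronwall's lemma on $[0,T]$. Throughout, the only structural input is that $|DA|,|DB|\le K$. By the mean value theorem this makes $A,B$ globally Lipschitz with constant comparable to $K$, and it also gives linear growth $|A(x)|\le |A(0)|+K|x|$. Hence a unique strong solution exists and Proposition~\ref{lem.2.1} applies: if needed, first take $A,B\in C^{1,\alpha}$ and then approximate, or use Kunita's $C^1$ version directly.

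\textbf{First estimate.} Set $Z(t)=\Phi(x,t)-\Phi(y,t)$. Then
\[
Z(t)=x-y+\int_0^t\bigl(A(\Phi(x,s))-A(\Phi(y,s))\bigr)\,\d s+\int_0^t\bigl(B(\Phi(x,s))-B(\Phi(y,s))\bigr)\,\d W .
\]
I will use $(a+b+c)^2\le 3(a^2+b^2+c^2)$, Cauchy--Schwarz on the drift term, and the It\^o isometry on the martingale term, each combined with the Lipschitz bound. This gives
\[
E|Z(t)|^2\le 3|x-y|^2+3K^2(T+1)\int_0^t E|Z(s)|^2\,\d s .
\]
Gronwall then yields $E|Z(t)|^2\le 3e^{3K^2(T+1)T}|x-y|^2$. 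A localization by stopping times guarantees that all moments involved are finite.

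\textbf{Jacobian bounds.} Each column $\partial_l\Phi$ solves the linear SDE of Proposition~\ref{lem.2.1}, whose coefficients $DA(\Phi),DB(\Phi)$ are bounded by $K$. By BDG with exponent $p$ and H\"older in time,
\[
E\sup_{s\le t}|\partial_l\Phi(x,s)|^p\le C_p\Bigl(1+(T^{p-1}+T^{p/2-1})K^p\int_0^t E\sup_{r\le s}|\partial_l\Phi(x,r)|^p\,\d s\Bigr).
\]
Gronwall gives a bound depending only on $p,K,T$. Since $|D\Phi|$ is the maximum of the column norms, the bound for $|D\Phi|$ follows. The inverse $(D\Phi)^{-1}$ satisfies the linear matrix SDE displayed after Proposition~\ref{lem.2.1}. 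Its coefficients $DA$, $DB$ and $(DB)^2$ are bounded by $K$, $K$ and $K^2$, so the identical argument applies. Here I read $D\Phi^{-1}$ in the statement as $(D\Phi)^{-1}$; the bound for the Jacobian of the inverse flow, $D(\Phi^{-1})=(D\Phi)^{-1}\circ\Phi^{-1}$, then follows pointwise.

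\textbf{Lipschitz estimate for $D\Phi$.} Set $V(t)=\partial_l\Phi(x,t)-\partial_l\Phi(y,t)$. Subtracting the two equations gives
\[
V(t)=\int_0^t DA(\Phi(x,s))V(s)\,\d s+\int_0^t\bigl(DA(\Phi(x,s))-DA(\Phi(y,s))\bigr)\partial_l\Phi(y,s)\,\d s+(\text{analogous }\d W\text{ terms}).
\]
The first term is bounded by $K|V|$. The second is bounded by $L\,|\Phi(x,s)-\Phi(y,s)|\,|\partial_l\Phi(y,s)|$. I will apply Cauchy--Schwarz in $\omega$ to this product, which gives
\[
E\bigl(L^2|Z|^2|\partial_l\Phi|^2\bigr)\le L^2\bigl(E|Z|^4\bigr)^{1/2}\bigl(E|\partial_l\Phi|^4\bigr)^{1/2}.
\]
This is the main obstacle: it requires a fourth-moment version of the first estimate, $E|Z(t)|^4\le C|x-y|^4$. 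I will obtain that by repeating the first step with BDG at exponent $4$ in place of the It\^o isometry, which is the same Gronwall argument. Combined with the moment bound for $D\Phi$ with $p=4$, this gives
\[
E|V(t)|^2\le C|x-y|^2+C\int_0^t E|V(s)|^2\,\d s ,
\]
and Gronwall finishes the proof with $C=C(K,L,T)$.
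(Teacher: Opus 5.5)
Your argument is correct, and it is the standard It\^o-isometry/BDG--H\"older--Gronwall proof that the paper omits as ``elementary''. In particular, the fourth-moment version of the first estimate combined with Cauchy--Schwarz is the right way to handle the cross term $\bigl(DA(\Phi(x,s))-DA(\Phi(y,s))\bigr)\partial_l\Phi(y,s)$. For $(D\Phi)^{-1}$ only the bounds on the coefficients matter. That is fortunate: with $J=D\Phi(x,t)$, the correct It\^o equation is $\d J^{-1}=J^{-1}\bigl((DB)^2-DA\bigr)\d t-J^{-1}DB\,\d W$, with right multiplication and a plus sign on $(DB)^2$, which is not literally the display after Proposition~\ref{lem.2.1}.

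One correction: drop the aside that the bound for $D(\Phi^{-1})=(D\Phi)^{-1}\circ\Phi^{-1}$ ``follows pointwise''. The evaluation point $\Phi^{-1}(x,t,\omega)$ is random and correlated with $(D\Phi)^{-1}(\cdot,t,\omega)$. So $\sup_x E|(D\Phi)^{-1}(x,t)|^p\le C$ does not control $E|(D\Phi)^{-1}(\Phi^{-1}(x,t),t)|^p$ without a separate argument, e.g.\ via the backward SDE of the inverse flow. Since the paper itself means $(D\Phi)^{-1}$, nothing else in your proof is affected.
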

\begin{lemma}\label{lem.3.1}
For equations \eqref{equ.1.1} and \eqref{equ.1.2}, assume that coefficients $A,B,A_{k},B_{k}$ satisfy the
global Lipschitz and global linear growth conditions, with the same Lipschitz constant $L$. If
\begin{align}
\lim\limits_{k\rightarrow\infty} \Vert A_{k}-A\Vert_{C^{0}}+\Vert B_{k}-B\Vert_{C^{0}}=0,\notag
\end{align}
then we have the continuous dependence of the solution on the coefficients: for any $T\in[0,+\infty)$
\begin{align}
\lim\limits_{k\rightarrow\infty}\sup_{x\in \R^{d}}\sup_{t\in[0,T]}E\Vert\Phi_{k}(x,t)-\Phi(x,t)\Vert^{2}=0.\notag
\end{align}
\end{lemma}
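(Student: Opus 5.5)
We prove Lemma \ref{lem.3.1} by the standard Gronwall argument applied to the difference of the two solutions, keeping every constant uniform in $x$. Fix $T\in[0,+\infty)$ and $x\in\R^{d}$, and write
\begin{align*}
\Phi_{k}(x,t)-\Phi(x,t)=&\int_{0}^{t}\big(A_{k}(\Phi_{k}(x,s))-A(\Phi(x,s))\big)\d s\\
&+\int_{0}^{t}\big(B_{k}(\Phi_{k}(x,s))-B(\Phi(x,s))\big)\d W(s).
\end{align*}
In each integrand we insert and subtract the middle term. For the drift this gives
$$A_{k}(\Phi_{k})-A(\Phi)=\big(A_{k}(\Phi_{k})-A_{k}(\Phi)\big)+\big(A_{k}(\Phi)-A(\Phi)\big).$$
The first bracket is bounded by $L\vert\Phi_{k}-\Phi\vert$, using the common Lipschitz constant. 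The second bracket is bounded by $\Vert A_{k}-A\Vert_{C^{0}}$ uniformly in the state. The same decomposition applies to the diffusion term with $B_{k}$ and $B$.

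We then take $\sup_{t\le T}$ and expectations, and use $(a+b)^{2}\le 2a^{2}+2b^{2}$. The drift integral is controlled by Cauchy--Schwarz, which yields a factor $T$. The stochastic integral is controlled by Doob's $L^{2}$ maximal inequality together with the It\^o isometry; it suffices to use the Itô isometry at fixed $t$, since the statement only has $\sup_{t}$ outside the expectation. Setting $g_{k}(t):=\sup_{x\in\R^{d}}E\vert\Phi_{k}(x,t)-\Phi(x,t)\vert^{2}$, these bounds give
$$
E\vert\Phi_{k}(x,t)-\Phi(x,t)\vert^{2}\le C_{T}\big(\Vert A_{k}-A\Vert_{C^{0}}^{2}+\Vert B_{k}-B\Vert_{C^{0}}^{2}\big)+C_{T}L^{2}\int_{0}^{t}E\vert\Phi_{k}(x,s)-\Phi(x,s)\vert^{2}\d s,
$$
with $C_{T}=4(T+1)T$ or similar. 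The constant depends only on $T$ and $L$, not on $x$ or $k$.

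Taking the supremum over $x$ gives
$$g_{k}(t)\le \varepsilon_{k}+C\int_{0}^{t}g_{k}(s)\d s,\qquad \varepsilon_{k}:=C_{T}\big(\Vert A_{k}-A\Vert_{C^{0}}^{2}+\Vert B_{k}-B\Vert_{C^{0}}^{2}\big),$$
because the supremum of an integral is at most the integral of the supremum. Gronwall's inequality then gives
$$\sup_{t\in[0,T]}g_{k}(t)\le \varepsilon_{k}e^{CT}.$$
This tends to $0$ as $k\to\infty$ by the hypothesis $\Vert A_{k}-A\Vert_{C^{0}}+\Vert B_{k}-B\Vert_{C^{0}}\to0$. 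Since the norm $\Vert\cdot\Vert$ on vectors coincides with $\vert\cdot\vert$, this is exactly the claim.

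The only delicate point is the application of Gronwall to $g_{k}$, which requires $g_{k}$ to be finite and measurable in $t$. Finiteness follows from the global linear growth condition. That condition gives the standard second-moment bound $E\vert\Phi(x,t)\vert^{2}\le C(1+\vert x\vert^{2})$, which on its own is not uniform in $x$. The difference $\Phi_{k}-\Phi$ has no $\vert x\vert$ contribution at time $0$, however. To be safe, we first run the Gronwall argument for fixed $x$, where all quantities are finite by the moment bound and continuous in $t$. This yields the bound $\varepsilon_{k}e^{CT}$ for each $x$, with a right-hand side independent of $x$. Taking the supremum over $x$ afterwards then avoids any measurability or finiteness issue with $g_{k}$.
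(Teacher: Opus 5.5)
Your argument is correct. The paper omits the proof of this lemma as elementary, and the intended argument is exactly this Gronwall estimate on $E\vert\Phi_{k}(x,t)-\Phi(x,t)\vert^{2}$, with $x$-uniformity coming from splitting off the $C^{0}$-perturbation term $\Vert A_{k}-A\Vert_{C^{0}}$, $\Vert B_{k}-B\Vert_{C^{0}}$ (one cosmetic slip: the coefficient in front of $\int_{0}^{t}$ comes out as $4L^{2}(T+1)$ rather than $C_{T}L^{2}$ with $C_{T}=4(T+1)T$, and Doob's inequality is unnecessary since the supremum in $t$ sits outside the expectation).
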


In the subsequent discussion, in order to emphasize the number of iterations of solution $\{\Phi(x,t)\}_{t\geq0}$, we will place the time index in the superscript, i.e. $\Phi^{n}(x,\omega)=\Phi(x,n,\omega)$ and $\Phi^{n}(\omega)(\cdot)=\Phi(\cdot,n,\omega)$. The same convention applies to $D\Phi$, similarly for the solution $\Phi_{k}$ of \eqref{equ.1.2}.

Let $\Lambda\subset\R^{d}$ be a compact subset. For $n\in\N^{+}$, $\epsilon>0$ and $x\in\Lambda$, set
\begin{align}
W^{s}(x,n,\epsilon,\omega):=\left\{y\in \R^{d}:d(\Phi^{i}(x,\omega),\Phi^{i}(y,\omega))\leq\epsilon, \text{ for } i\in[0,n-1]\right\}.\notag
\end{align}
Let $\text{card} A$ denote the cardinality of set $A$.
Define  $r(\Phi,n,\delta,\epsilon,\Lambda,x,\omega):=\text{card} F_{n}(x,\omega)$, where $F_{n}(x,\omega)\subset\Lambda\cap W^{s}(x,n,\epsilon,\omega)$ is the maximal set such that for any $y\neq z$ in $F_{n}(x,\omega)$, there exists some integer $i\in[0,n-1]$ such that $$
d(\Phi^{i}(x,\omega),\Phi^{i}(y,\omega))>\delta.
$$
Denote $r(\Phi,n,\delta,\epsilon,\Lambda,\omega):=\sup_{x\in\Lambda}r(\Phi,n,\delta,\epsilon,\Lambda,x,\omega)$ and
$$
r(\Phi,\epsilon,\Lambda):=\lim_{\delta\rightarrow0}\limsup\limits_{n\rightarrow\infty}\dfrac{1}{n} \int_{\Omega}\ln r(\Phi,n,\delta,\epsilon,\Lambda,\omega)\d\P,
$$
which captures the exponential growth rate of the maximal $(n,\delta)$-separated set within the local stable set.

\begin{lemma}\label{lem.3.3}
For equation \eqref{equ.1.1}, recalling that $\{\Phi(x,t)\}_{t\geq0}$ is its unique solution and $\mu$ is an invariant measure. Let $\Lambda\subset\R^{d}$ be a compact subset with $\mu(\Lambda)>\sigma$ for some constant $0<\sigma<1$.
For any $\epsilon>0$, if $\beta$ is a finite measurable partition of $\R^{d}$ with $\textup{diam}(\beta)<\epsilon$ on $\Lambda$, then
$$
h_{\mu}(\Phi)\leq h_{\mu}(\Phi,\beta)+r(\Phi,\epsilon,\Lambda).
$$
\end{lemma}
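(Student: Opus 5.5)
Following Newhouse's argument for the deterministic case, the plan is to compare $h_{\mu}(\Phi,\xi)$ for an arbitrary finite partition $\xi$ with $h_{\mu}(\Phi,\beta)$. The bridge is the standard inequality
\begin{align*}
H_{\mu}\Big(\bigvee_{i=0}^{n-1}\Phi^{-i}(\omega)\xi\Big)\leq H_{\mu}\Big(\bigvee_{i=0}^{n-1}\Phi^{-i}(\omega)\beta\Big)+H_{\mu}\Big(\bigvee_{i=0}^{n-1}\Phi^{-i}(\omega)\xi\,\Big|\,\bigvee_{i=0}^{n-1}\Phi^{-i}(\omega)\beta\Big),
\end{align*}
applied fibrewise. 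We then integrate over $\Omega$, divide by $n$ and let $n\to\infty$. The first term gives $h_{\mu}(\Phi,\beta)$ by Remark \ref{rem.2.6}. It then suffices to show that the conditional term, divided by $n$, is asymptotically at most $r(\Phi,\epsilon,\Lambda)$ plus a small error, and afterwards to take the supremum over $\xi$.

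\textbf{Controlling the conditional entropy.} Fix $\omega$ and an atom $B$ of $\beta^{n}_{\omega}:=\bigvee_{i=0}^{n-1}\Phi^{-i}(\omega)\beta$. Points of $B$ whose whole orbit segment $\Phi^{i}(\cdot,\omega)$, $0\le i\le n-1$, lies in $\Lambda$ are pairwise $\epsilon$-close along that segment, because $\mathrm{diam}(\beta)<\epsilon$ on $\Lambda$. Hence such a $B\cap\Lambda$ is contained in some $W^{s}(x,n,\epsilon,\omega)$ with $x\in\Lambda$. Since orbits leave $\Lambda$, we cannot use this directly. Instead we would use the ergodic theorem for the skew product $F$ (Kifer's proposition) to get a.e. the fraction of times in $\Lambda$ larger than $\sigma$, and repeat the Newhouse/Misiurewicz trick. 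This means refining $\beta$ with the partition $\{\Lambda,\R^{d}\setminus\Lambda\}$ and using only the visits to $\Lambda$. The cost of not controlling the times outside $\Lambda$ is bounded combinatorially by $\binom{n}{\lfloor(1-\sigma)n\rfloor}$ times the number of possible atom labels, which is an exponential term. If that term is not negligible, we absorb it by replacing $\beta$ with a partition that is finer outside $\Lambda$, or by choosing $\Lambda$ with $\mu(\Lambda)$ close to $1$.

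Next, inside a Bowen-type stable set we bound the number of atoms of $\xi^{n}_{\omega}$ it meets. Choose $\delta$ smaller than a Lebesgue-type number for $\xi$, after approximating $\xi$ by a partition whose boundaries have small $\mu$-measure (the standard $\partial\xi$ trick). Then an $(n,\delta)$-spanning set of $W^{s}(x,n,\epsilon,\omega)\cap\Lambda$ has cardinality at most $r(\Phi,n,\delta/2,\epsilon,\Lambda,\omega)$, and each $\delta$-Bowen ball meets at most $2^{\text{(number of times near }\partial\xi)}$ atoms. The conditional entropy over $B$ is then at most $\ln r(\Phi,n,\delta,\epsilon,\Lambda,\omega)$ plus $n\cdot o(1)$ as $\delta\to0$, which we get by averaging the boundary-visit count with the ergodic theorem.

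\textbf{Main obstacle.} The main obstacle is the noncompactness of $\R^{d}$ combined with the randomness. Separated-set counts are only defined on $\Lambda$, and the boundary-visit estimates must hold uniformly after integrating in $\omega$. Measurability of $\omega\mapsto r(\Phi,n,\delta,\epsilon,\Lambda,\omega)$ and integrability of its logarithm also need to be checked. For the integrability we would use the moment bounds of Lemma \ref{lem.4.2}: the Lipschitz constants of $\Phi^{i}(\omega)$ on $\Lambda$ have finite moments, so $\ln r$ is bounded by a covering-number estimate of order $d\sum_i\ln^{+}\mathrm{Lip}$. Granting these, we let $n\to\infty$, then $\delta\to0$, and finally take the supremum over $\xi$, which yields $h_{\mu}(\Phi)\le h_{\mu}(\Phi,\beta)+r(\Phi,\epsilon,\Lambda)$.
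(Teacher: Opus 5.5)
\textbf{There is a genuine gap at the conditional-entropy bound.} The gap is the bound on $\frac1n\int_\Omega H_\mu(\xi^n_\omega\mid\beta^n_\omega)\,\d\P$. To use $r(\Phi,n,\delta,\epsilon,\Lambda,\omega)$ there, you need the part in $\Lambda$ of each atom of $\beta^n_\omega$ (or of $\beta^n_\omega\vee\{\Lambda,\R^d\setminus\Lambda\}$) to lie in a single $W^s(x,n,\epsilon,\omega)$. That means its points must be $\epsilon$-close at \emph{every} time $0\le i\le n-1$. The hypothesis only bounds $\mathrm{diam}(C\cap\Lambda)$ for $C\in\beta$, and some atom of $\beta$ is unbounded. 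So while an orbit is outside $\Lambda$, the points of an atom can be arbitrarily far apart. Restricting attention to the visits to $\Lambda$ does not put them in a Bowen ball, so $r$ gives no control. Each excursion time can then cost up to $\ln\mathrm{card}(\xi)$, an error of order $\mu(\R^d\setminus\Lambda)\ln\mathrm{card}(\xi)$ per step. This error is not uniform in $\xi$, so it survives the final $\sup_\xi$. Your proposed remedies, refining $\beta$ outside $\Lambda$ or enlarging $\Lambda$, change the data $\beta,\Lambda$ that the statement fixes; the first is impossible anyway for a finite partition of the unbounded set $\R^d\setminus\Lambda$. The measurability and integrability issues you call the main obstacle are secondary.

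\textbf{The paper's proof has the same hole, and the statement cannot be proved as written.} The paper runs Newhouse's argument with $\Phi^N$ and $\alpha=\{E_1,\dots,E_r,\R^d\setminus\bigcup E_i\}$. It matches atoms over $B\subset\Lambda$ to a maximal separated subset of $\Lambda\cap W^s(x_B,n,\epsilon,\omega)$, tacitly assuming $B\subset W^s(x_B,n,\epsilon,\omega)$. It then removes the term $\frac1N\ln\mathrm{card}(\alpha)\,\mu(\R^d\setminus\Lambda)$ using $h_\mu(\Phi,\alpha)=\frac1N h_\mu(\Phi^N,\alpha)$, but only $\frac1N h_\mu(\Phi^N,\alpha)\le h_\mu(\Phi,\alpha)$ holds. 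In fact the inequality is false as stated. Take $p\in\mathrm{supp}\,\mu$ with $\mu\{p\}=0$, fix a compact ball $\Lambda_0\ni p$, and let $\beta=\{\Lambda,\R^d\setminus\Lambda\}$, where $\Lambda\subset\Lambda_0$ is a closed ball about $p$ of diameter $<\epsilon$; this $\beta$ is admissible. Taking $n=1$ in Remark~\ref{rem.2.6} gives $h_\mu(\Phi,\beta)\le H_\mu(\beta)$, which tends to $0$ as the radius shrinks. Also $r(\Phi,\epsilon,\Lambda)\le r(\Phi,\epsilon,\Lambda_0)$. So the lemma would give $h_\mu(\Phi)\le\inf_{\epsilon>0}r(\Phi,\epsilon,\Lambda_0)$. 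For $C^\infty$ coefficients the right side is $0$: this follows from Theorem~\ref{lem.3.5} with Lemma~\ref{lem.3.6}, and for $B=0$ independently from Buzzi's asymptotic $h$-expansiveness of $C^\infty$ maps. Every non-atomic $\mu$ would then have zero entropy. This fails, for example, for $B=0$ and a compactly supported vector field whose time-one map has positive topological entropy, such as a suspended horseshoe, with $\mu$ an ergodic measure of positive entropy. A provable version needs a stronger hypothesis or an extra error term for excursions from $\Lambda$. For instance, with $\mu(\Lambda)=1$, for a.e. $\omega$ the $\mu$-a.e. orbit segment stays in $\Lambda$ and Newhouse's argument goes through.
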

\begin{proof}
For the partition $\beta$ and sufficiently large $n\in\N^{+}$, denote
\begin{align}
\beta^{n}(\omega):=\bigvee_{i=0}^{n-1}\Phi^{-i}(\omega)(\beta).\notag
\end{align}
Let $k,N\in\N^{+}$ and $n:=kN$. Note that for any finite measurable partition  $\alpha=\{E_{1},\cdots,E_{r},\R^{d}\setminus\cup_{i=1}^{r} E_{i}\}$, we have that for almost all $\omega$
\begin{align}\label{equ.3.6}
H_{\mu}(\bigvee_{i=0}^{k-1}\Phi^{-iN}(,\omega)(\alpha))\leq H_{\mu}(\beta^{n}(\omega))+H_{\mu}(\bigvee_{i=0}^{k-1}\Phi^{-iN}(\omega)(\alpha)|\beta^{n}(\omega)).
\end{align}
Let $\gamma:=\left\{\Lambda,\R^{d}\setminus\Lambda\right\}$, $\beta_{1}^{n}(\omega):=\beta^{n}(\omega)\vee\gamma$. Let $\delta>0$ be smaller than $1/2$ of the minimum distance between any of the $E_{i}$'s. Let $B\in\beta_{1}^{n}(\omega)$ and $x_{B}\in B$. Let $F(x_{B},\omega)$ be a maximal subset of $\Lambda\bigcap W^{s}(x_{B},n,\epsilon,\omega)$ such that for $x\neq y\in F(x_{B},\omega)$ there exists $j\in[0,k)$ with $d(\Phi^{jN}(x,\omega),\Phi^{jN}(y,\omega))>\delta$. Clearly, $F(x_{B},\omega)$ is a $(n,\delta)$-separated, so
\begin{align}\label{equ.3.1}
\text{card}F(x_{B},\omega)\leq \text{card} F_{n}(x,\omega)= r(\Phi,n,\delta,\epsilon,\Lambda,\omega).
\end{align}
Notice that for the first term of \eqref{equ.3.6}
\begin{align}\label{equ.3.7}
&H_{\mu}(\bigvee_{i=0}^{k-1}\Phi^{-iN}(\omega)(\alpha)|\beta^{n}(\omega))\notag\\
\leq& H_{\mu}(\bigvee_{i=0}^{k-1}\Phi^{-iN}(\omega)(\alpha)\bigvee\gamma|\beta^{n}(\omega))\notag\\
=&H_{\mu}(\gamma|\beta^{n}(\omega))+H_{\mu}(\bigvee_{i=0}^{k-1}\Phi^{-iN}(\omega)(\alpha)|\beta_{1}^{n}(\omega))\notag\\
\leq &\ln 2+H_{\mu}(\bigvee_{i=0}^{k-1}\Phi^{-iN}(\omega)(\alpha)|\beta_{1}^{n}(\omega))
\end{align}
holds by the fact $H_{\mu}(\gamma|\beta^{n}(\omega))\leq H_{\mu}(\gamma)=\ln 2$. And
\begin{align}\label{equ.3.8}
&H_{\mu}(\bigvee_{i=0}^{k-1}\Phi^{-iN}(\omega)(\alpha)|\beta_{1}^{n}(\omega))\notag\\
=&\sum_{B\in\beta_{1}^{n}(\omega)}H_{\mu}(\bigvee_{i=0}^{k-1}\Phi^{-iN}(\omega)(\alpha)|B)\mu(B)  \notag\\
=&\sum_{\substack{B\in\beta_{1}^{n}(\omega)\\B\subset \Lambda}}H_{\mu}(\bigvee_{i=0}^{k-1}\Phi^{-iN}(\omega)(\alpha)|B)\mu(B)+\sum_{\substack{B\in\beta_{1}^{n}(\omega)\\B\cap\Lambda=\varnothing}}H_{\mu}(\bigvee_{i=0}^{k-1}\Phi^{-iN}(\omega)(\alpha)|B)\mu(B)\notag\\
\leq&\sum_{\substack{B\in\beta_{1}^{n}(\omega)\\B\subset \Lambda}}H_{\mu}(\bigvee_{i=0}^{k-1}\Phi^{-iN}(\omega)(\alpha)|B)\mu(B)+k\cdot \ln(\text{card}(\alpha))\cdot \mu(\R^{d}\setminus\Lambda).
\end{align}
For the first term above, notice that for each $B\in\beta_{1}^{n}(\omega)$ with $B\in\Lambda$, and each $A\in\bigvee_{i=0}^{k-1}\Phi^{-iN}(\omega)(\alpha)|B$, we can select a representative point $x_{A}\in A$ and assign to it an element $\phi(A)\in F(x_{B},\omega)$ such that $$d(\Phi^{jN}(x_{A},\omega),\Phi^{jN}(\phi(A),\omega))<\delta,\qquad\text{for all } j\in[0,k-1].$$
For a fixed $\phi(A)$, there are at most $2^{k}$ possible choices $A$ that can be assigned to it. Hence
\begin{align}\label{equ.3.9}
\sum_{\substack{B\in\beta_{1}^{n}\\B\subset \Lambda}}H_{\mu}(\bigvee_{i=0}^{k-1}\Phi^{-iN}(\omega)(\alpha)|B)\mu(B)\leq k\ln 2+\ln \text{card} F(x_{B},\omega).
\end{align}
Conclusionly, by \eqref{equ.3.7}, \eqref{equ.3.8}, \eqref{equ.3.9}
and the fact \eqref{equ.3.1}, we get
\begin{align}\label{equ.3.10}
&\dfrac{1}{kN}H_{\mu}(\bigvee_{i=0}^{k-1}\Phi^{-iN}(\omega)(\alpha)|\beta^{kN}(\omega))\notag\\
\leq& \dfrac{\ln 2}{kN}+\dfrac{\ln 2}{N}+\dfrac{1}{kN}\ln r(\Phi,kN,\delta,\epsilon,\Lambda,\omega)+\dfrac{1}{N}\ln \text{card}(\alpha)\cdot \mu(\R^{d}\setminus\Lambda).
\end{align}
Given $\epsilon_{1}>0$, let $N$ be a sufficiently large number such that $\frac{\ln 2}{N}<\epsilon_{1}$ and $\frac{1}{N}\ln\text{card}(\alpha)\mu(\R^{d}\setminus\Lambda)<\epsilon_{1}$. Then by \eqref{equ.3.6} and \eqref{equ.3.10}, we get
\begin{align}
&h_{\mu}(\Phi,\alpha)-h_{\mu}(\Phi,\beta)\notag\\
=&\dfrac{1}{N}h_{\mu}(\Phi^{N},\alpha)-h_{\mu}(\Phi,\beta)\notag\\
=&\limsup_{k\rightarrow\infty}\dfrac{1}{kN}\int_{\Omega}H_{\mu}(\bigvee_{i=0}^{k-1}\Phi^{-iN}(\omega)(\alpha))\d\P-\limsup_{k\rightarrow\infty}\dfrac{1}{kN}\int_{\Omega}H_{\mu}(\beta^{kN}(\omega))\d\P\notag\\
\leq& \limsup_{k\rightarrow\infty}\dfrac{1}{kN}\int_{\Omega} H_{\mu}(\bigvee_{i=0}^{k-1}\Phi^{-iN}(\omega)(\alpha)|\beta^{kN}(\omega))\d\P\notag\\
\leq& \dfrac{2}{N}+\dfrac{1}{N}\ln \text{card}(\alpha)\cdot \mu(\R^{d}\setminus\Lambda)+\limsup_{k\rightarrow\infty}\frac{1}{kN}\int_{\Omega}\ln r(\Phi,kN,\delta,\epsilon,\Lambda,\omega)\d\P\notag\\
\leq &2\epsilon_{1}+\limsup\limits_{n\rightarrow\infty}\dfrac{1}{n} \int_{\Omega}\ln r(\Phi,n,\delta,\epsilon,\Lambda,\omega)\d\P.\notag
\end{align}
Taking the limit $\delta\rightarrow0$ and the supremum over all finite measurable partitions, we get that
\begin{align}
h_{\mu}(\Phi)=\sup_{\alpha}h_{\mu}(\Phi,\alpha)\leq h_{\mu}(\Phi,\beta)+r(\Phi,\epsilon,\Lambda).\notag
\end{align}
We complete the proof.
\end{proof}

\begin{lemma}[Lyapunov norm]\label{lem.3.4}
For equation \eqref{equ.1.1},  let coefficients $A,B$ belong to $ C^{1}(\R^{d})$, $\vert DA\vert,\vert DB\vert$ be bounded by a constant $K$ and $\mu$ be an invariant measure.
Then there exists a subset $\Gamma\subset\R^{d}\times\Omega$ with the following properties:
\begin{itemize}
\item[(1)] $\mu\times\P(\Gamma)=1$.
\item[(2)] There exists a splitting $E(x,\omega)=E_{1}(x,\omega)\oplus E_{2}(x,\omega)\oplus E_{3}(x,\omega)$ depending measurably on $(x,\omega)\in \Gamma$.
\item[(3)] Suppose that $\lambda_{1}(x)$ is a negative measurable function and $\lambda_{3}(x),\epsilon(x)$ are positive measurable functions such that $v\in E_{1}(x,\omega)\Rightarrow \lambda(x,v)< \lambda_{1}(x)+\epsilon(x)<0$ and $v\in E_{3}(x,\omega)$ implies $\lambda(x,v)>\lambda_{3}(x)-\epsilon(x)>0$. Then there is a measurable norm $\vert\cdot\vert'_{x,\omega}$ on $E(x,\omega)$ for $(x,\omega)\in \Gamma$ and a measurable function $A(x,\omega)$ such that
\begin{itemize}
\item [(a)]$
\left\{
\begin{aligned}
&v\in E_{1}(x,\omega)\Rightarrow
\vert D\Phi(x,\omega)v\vert'_{F(x,\omega)}
\leq e^{\lambda_{1}(x)+\epsilon(x)}
\vert v\vert'_{x,\omega}, \\[6pt]
&v\in E_{2}(x,\omega)\Rightarrow
e^{-\epsilon(x)}\vert v\vert'_{x,\omega}
\leq\vert D\Phi(x,\omega)v\vert'_{F(x,\omega)}
\leq e^{\epsilon(x)}\vert v\vert'_{x,\omega}, \\[6pt]
&v\in E_{3}(x,\omega)\Rightarrow
\vert D\Phi(x,\omega)v\vert'_{F(x,\omega)}\geq e^{\lambda_{3}(x)-\epsilon(x)}\vert v\vert'_{x,\omega};
\end{aligned}
\right.
$
\item[(b)] $\vert v\vert\leq \vert v\vert'_{x,\omega}\leq A(x,\omega)\vert v\vert$ for $(x,\omega)\in\Gamma$ and $v\in E(x,\omega);$
\item[(c)] for any $(x,\omega)\in\Gamma$ $$\lim\limits_{n\rightarrow\pm\infty}\dfrac{1}{n}\ln A(F^{n}(x,\omega))=0.$$
\end{itemize}
\end{itemize}
\end{lemma}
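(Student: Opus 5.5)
The plan is the standard Lyapunov-norm (Pesin-adapted norm) construction for the skew product $F(x,\omega)=(\Phi(x,\omega),\theta\omega)$, based on the random multiplicative ergodic theorem, Proposition~\ref{lya-non}.

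\textbf{Step 1: Oseledets data.} By Lemma~\ref{lem.4.2} (moment bounds on $D\Phi$ and $D\Phi^{-1}$), the integrability condition \eqref{equ.2.3} holds. Proposition~\ref{lya-non} then gives a full-measure set $\Gamma_0$ together with the Oseledets splitting. I will group the Oseledets subspaces into three bundles: $E_1$ collects exponents $<0$, $E_2$ the zero exponent, and $E_3$ exponents $>0$. This gives (1) and (2), with measurability inherited from the Oseledets subspaces. Since $\Phi$ is a stochastic flow of diffeomorphisms, the cocycle $D\Phi(F^n(x,\omega))$ is invertible. Using the two-sided version (passing to the invertible extension of $\theta$ if needed), I will work with $n\in\Z$ as in (c).

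\textbf{Step 2: define the norm.} For $v\in E_1(x,\omega)$ set
\[
\vert v\vert'_{x,\omega}:=\sum_{n\ge0}e^{-n(\lambda_1(x)+\epsilon(x))}\vert D\Phi^n(x,\omega)v\vert .
\]
For $v\in E_3(x,\omega)$ use the analogous sum with $D\Phi^{-n}$ and weights $e^{n(\lambda_3(x)-\epsilon(x))}$. For $v\in E_2(x,\omega)$ use the two-sided sum $\sum_{n\in\Z}e^{-\vert n\vert\epsilon(x)}\vert D\Phi^n v\vert$. Then extend to all of $E(x,\omega)$ by taking the maximum, or the $\ell^1$-sum, over components.

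On the complement of the top-exponent bounds, the Oseledets limits make the summands decay exponentially. I will shrink the margins to $\epsilon/2$ inside the definition so the series converge strictly. With that choice, (a) follows from the one-step shift of the index, e.g. $\vert D\Phi v\vert'_{F(x,\omega)}\le e^{\lambda_1+\epsilon}\vert v\vert'_{x,\omega}$ on $E_1$. Note that $\lambda_i(x)$ and $\epsilon(x)$ are $F$-invariant because they are independent of $\omega$ and invariant along orbits. The lower bound in (b) is immediate from the $n=0$ term, after normalizing so that the norm of a sum dominates the Euclidean norm of the sum, for instance by multiplying by a fixed constant. The upper bound defines $A(x,\omega)$ as the operator comparison constant, which is measurable.

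\textbf{Step 3: tempering (c), the main obstacle.} I will bound $A(x,\omega)$ by a product of two things: the sums $\sup_n e^{-n(\lambda+\epsilon/2)}\Vert D\Phi^n\vert_{E_i}\Vert$, and the inverse sines of the angles between the bundles. Each of these is finite a.e. but a priori not tempered. To get temperedness, I will first show the relation
\[
A(F(x,\omega))\le e^{\epsilon}A(x,\omega)\cdot C(x,\omega),
\]
where $C$ is built from $\Vert D\Phi(x,\omega)\Vert$ and $\Vert D\Phi^{-1}(x,\omega)\Vert$ and has integrable $\log^+$ by Lemma~\ref{lem.4.2}. The tempering lemma (if $\log^+\big(A\circ F/A\big)\in L^1$, then $\frac1n\log A(F^n)\to0$) then yields (c).

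Alternatively, the uniform version can be produced by replacing $\epsilon$ with $\epsilon/2$ and invoking the standard fact that the Oseledets angles are tempered. Handling the non-ergodic case, with measurable $\lambda_i(x)$ and $\epsilon(x)$, is done fibrewise over the ergodic decomposition, since all these quantities are invariant.
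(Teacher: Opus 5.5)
Your proposal follows essentially the same route as the paper. Both group the Oseledets spaces into contracting, neutral and expanding bundles, put orbit-weighted adapted norms on each (you use sums where the paper uses suprema), and obtain (c) from Ma\~n\'e's tempering lemma applied to a one-step estimate. Your version is in fact more careful on (b): you account for the normalization needed for $|v|\le|v|'_{x,\omega}$ and for the angles between $E_1,E_2,E_3$, and you derive temperedness of the full constant from $A\circ F\le C\,A$ with $\log^{+}C\in L^{1}(\mu\times\P)$. The paper simply takes $A=\max_i A_i$, which leaves the angle factor needed for $|v|'_{x,\omega}\le A(x,\omega)|v|$ on all of $E(x,\omega)$ implicit.

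The one slip is the remark about shrinking the margin to $\epsilon/2$. It is unnecessary, because the strict inequality $\lambda(x,v)<\lambda_1(x)+\epsilon(x)$ over finitely many exponents already makes the weighted series converge with margin $\epsilon$ itself. Worse, if $\lambda_1(x)$ is only an upper bound rather than the top exponent on $E_1$, shrinking can actually break convergence, so you should drop it.
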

\begin{proof}
Assume first that $\mu$ is ergodic. The extension to an arbitrary invariant measure is standard.
By the Oseledets multiplicative ergodic theorem (Proposition \ref{lya-non}), we know that there exist a subset $\Gamma\subset \R^{d}\times\Omega$ with $\mu\times\P(\Gamma)=1$ such that for any $(x,\omega)\in\Gamma$, there exist a splitting $E(x,\omega)=E_{1}(x,\omega)\oplus E_{2}(x,\omega)\oplus E_{3}(x,\omega)$ and constants $\epsilon>0,\lambda_{1}<0,\lambda_{3}>0$ satisfying
\begin{align}\label{equ.3.11}
&v\in E_{1}(x,\omega)\Rightarrow\lim\limits_{n\rightarrow\infty}\dfrac{1}{n}\ln\vert D\Phi^{n}(x,\omega)v\vert<\lambda_{1}+\epsilon, \notag\\
&v\in E_{2}(x,\omega)\Rightarrow -\epsilon<\lim\limits_{n\rightarrow\infty}\dfrac{1}{n}\ln\vert D\Phi^{n}(x,\omega)v\vert<\epsilon,\\
&v\in E_{3}(x,\omega)\Rightarrow \lim\limits_{n\rightarrow\infty}\dfrac{1}{n}\ln\vert D\Phi^{n}(x,\omega)v\vert>\lambda_{3}-\epsilon.\notag
\end{align}
Then for any $v\in E_{1}(x,\omega)$, there exists a measurable function $C_{1}(x,\omega)$ defined on $\Gamma$ such that
$$
\vert D\Phi^{n}(x,\omega)v\vert\leq C_{1}(x,\omega)e^{(\lambda_{1}+\epsilon) n}\vert v\vert.
$$
Let
$$
\vert v\vert'_{x,\omega,1}:=\sup_{n\geq 0}\vert D\Phi^{n}(x,\omega)v\vert e^{-(\lambda_{1}+\epsilon)n}
$$
and
$$
A_{1}(x,\omega):=\sup_{n\geq 0}\Vert D\Phi^{n}(x,\omega)\Vert e^{-(\lambda_{1}+\epsilon)n}.
$$
Notice that
\begin{align}\label{equ.3.14}
A_{1}(x,\omega)\leq C_{1}(x,\omega),\qquad\vert v\vert\leq \vert v\vert'_{x,\omega,1}\leq A_{1}(x,\omega)\vert v\vert.
\end{align}
Moreover,
\begin{align}\label{equ.3.12}
\vert D\Phi(x,\omega)v\vert'_{F(x,\omega),1}=&\sup_{n\geq 0}\vert D\Phi^{n}(F(x,\omega))D\Phi(x,\omega)v\vert  e^{-(\lambda_{1}+\epsilon)n}\notag\\
=& e^{\lambda_{1}+\epsilon}\Big(\sup_{n\geq 0}\vert D\Phi^{n+1}(x,\omega)\vert e^{-(\lambda_{1}+\epsilon)(n+1)}\Big)\notag\\
\leq& e^{\lambda_{1}+\epsilon}\vert v\vert'_{x.\omega,1}.
\end{align}
We know that
\begin{align}
A_{1}(F^{-1}(x,\omega))=&\sup_{n\geq 0}\Vert D\Phi^{n}(F^{-1}(x,\omega))\Vert e^{-(\lambda_{1}+\epsilon)n}\notag\\
\leq& \sup\big(1,\sup_{n\geq 1}\Vert D\Phi^{n-1}(x,\omega)\Vert\Vert D\Phi(F^{-1}(x,\omega))\Vert e^{-(\lambda_{1}+\epsilon)n}\big)\notag\\
\leq &\sup \big(1,e^{-(\lambda_{1}+\epsilon)}\Vert D\Phi(F^{-1}(x,\omega))\Vert A_{1}(x,\omega)\big)\notag\\
\leq & A_{1}(x,\omega)\sup \big(1,e^{-(\lambda_{1}+\epsilon)}\Vert D\Phi(F^{-1}(x,\omega))\Vert\big).\notag
\end{align}
So
$$
\ln A_{1}(F^{-n}(x,\omega))-\ln A_{1}(x,\omega)\leq \ln \Big(\sup\big(1,e^{-(\lambda_{1}+\epsilon)}\Vert D\Phi(F^{-1}(x,\omega))\Vert\big)\Big),
$$
whose right-hand side is integrable. Then by lemma in Ma$\tilde{\mathrm{n}}\acute{\mathrm{e}}$ \cite[p. 551]{Mane1981}, we get that
\begin{align}\label{equ.3.13}
\lim\limits_{n\rightarrow\pm\infty}\dfrac{1}{n}\ln A_{1}(F^{n}(x,\omega))=0.
\end{align}
By \eqref{equ.3.12}, \eqref{equ.3.14} and \eqref{equ.3.13}, it follows that the norm $\vert\cdot\vert_{x,\omega,1}$ and the function $A_{1}(x,\omega)$ satisfy the first property of $(a)$, as well as $(b)$ and $(c)$.

Next, by the second term of \eqref{equ.3.11}, we know that for any $v\in E_{2}(x,\omega)$ there exist measurable functions $C_{2}(x,\omega)$ and $C'_{2}(x,\omega)$ such that
$$
\vert D\Phi^{n}(x,\omega)v\vert\leq C_{2}(x,\omega)e^{\epsilon n}\vert v\vert,\qquad \vert D\Phi^{-n}(x,\omega)v\vert\leq C'_{2}(x,\omega)e^{\epsilon n}\vert v\vert.
$$
Denote
$$\vert v\vert'_{x,\omega,2}:=\max\left\{\sup_{n\geq0}\vert D\Phi^{n}(x,\omega)v\vert e^{-\epsilon n},\sup_{n<0}\vert D\Phi^{n}(F^{n}(x,\omega))v\vert e^{\epsilon n}\right\}$$
and
$$
A_{2}(x,\omega):=\max\left\{\sup_{n\geq0}\Vert D\Phi^{n}(x,\omega)\Vert e^{-\epsilon n},\sup_{n<0}\Vert D\Phi^{n}(F^{n}(x,\omega))\Vert e^{\epsilon n}\right\}.
$$
It is not difficult to find that
\begin{align}\label{equ.3.21}
A_{2}(x,\omega)\leq \max\left\{C_{2}(x,\omega),C'_{2}(x,\omega)\right\},\qquad \vert v\vert\leq \vert v\vert'_{x,\omega,2}\leq A_{2}(x,\omega)\vert v\vert.
\end{align}
Note that
\begin{align}\label{equ.3.18}
&\vert D\Phi(x,\omega)v\vert'_{F(x,\omega),2}\notag\\=&\max\left\{\sup_{n\geq0}\vert D\Phi^{n}(F(x,\omega))D\Phi(x,\omega)v\vert e^{-\epsilon n},\sup_{n<0}\vert D\Phi^{n}(F^{n+1}(x,\omega))D\Phi(x,\omega)v\vert e^{\epsilon n}\right\}\notag\\
\leq&\max\left\{\sup_{n\geq0}\vert D\Phi^{n+1}(x,\omega)v\vert e^{-\epsilon n},\sup_{n<0}\vert D\Phi^{n}(F^{n}(x,\omega))v\vert e^{\epsilon (n)}\right\}\notag\\
\leq& e^{\epsilon}\vert v\vert'_{x,\omega,2}
\end{align}
and
\begin{align}\label{equ.3.19}
&\vert D\Phi^{-1}(F^{-1}(x,\omega))v\vert'_{F^{-1}(x,\omega),2}\notag\\
=&\max\{\sup_{n\geq0}\vert D\Phi^{n}(F^{-1}(x,\omega))D\Phi^{-1}(F^{-1}(x,\omega))v\vert e^{-\epsilon n},\notag\\
&\qquad\qquad\sup_{n<0}\vert D\Phi^{n}(F^{n-1}(x,\omega))D\Phi^{-1}(F^{-1}(x,\omega))v\vert e^{\epsilon n}\}\notag\\
\leq&\max\left\{\sup_{n\geq0}\vert D\Phi^{n}(x,\omega)v\vert e^{-\epsilon (n-1)},\sup_{n<0}\vert D\Phi^{n}(F^{n}(x,\omega))v\vert e^{\epsilon (n+1)}\right\}\notag\\
\leq&e^{\epsilon}\vert v\vert'_{x,\omega,2}.
\end{align}
Moreover, we know that
\begin{align}
A_{2}(F(x,\omega))=&\max\left\{\sup_{n\geq0}\Vert D\Phi^{n}(F(x,\omega))\Vert e^{-\epsilon n},\sup_{n<0}\Vert D\Phi^{n}(F^{n+1}(x,\omega))\Vert e^{\epsilon n}\right\}\notag\\
=&\max\Big\{\sup_{n\geq0}\Vert D\Phi^{n}(F(x,\omega))\Vert\Vert D\Phi(x,\omega)\Vert e^{-\epsilon (n-1)},\notag\\
&\qquad\qquad\sup_{n<0}\Vert D\Phi^{n}(F^{n+1}(x,\omega))\Vert\Vert D\Phi(x,\omega)\Vert e^{\epsilon (n+1)}\Big\}\times\max\left\{1,\Vert D\Phi^{-1}(x,\omega)\Vert e^{-\epsilon}\right\}\notag\\
\leq&  A_{2}(x,\omega)\times\max\left\{1,\Vert D\Phi^{-1}(x,\omega)\Vert e^{-\epsilon}\right\},\notag
\end{align}
which implies
$$
\ln A_{2}(F(x,\omega))-\ln A_{2}(x,\omega)\leq \ln \max\left\{1,\Vert D\Phi^{-1}(x,\omega)\Vert e^{\epsilon}\right\}.
$$
Due to Ma$\tilde{\mathrm{n}}\acute{\mathrm{e}}$ \cite[p. 551]{Mane1981}, we have
\begin{align}\label{equ.3.20}
\lim\limits_{n\rightarrow\pm\infty}\dfrac{1}{n}\ln A_{2}(F^{n}(x,\omega))=0.
\end{align}
By \eqref{equ.3.18}, \eqref{equ.3.19}, \eqref{equ.3.21} and \eqref{equ.3.20}, we conclude that the norm $\vert\cdot\vert_{x,\omega,2}$ and the function $A_{2}(x,\omega)$ satisfy the second property of $(a)$, as well as $(b)$ and $(c)$.

The third term of \eqref{equ.3.11} implies that for any $v\in E_{3}(x,\omega)$
\begin{align}
\lim\limits_{n\rightarrow\infty}\dfrac{1}{n}\ln\vert D\Phi^{-n}(x,\omega)v\vert\leq -\lambda_{3}+\epsilon.\notag
\end{align}
We define
$$\vert v\vert'_{x,\omega,3}:=\sup_{n\geq 0}\vert D\Phi^{-n}(x,\omega)v\vert e^{(\lambda_{3}-\epsilon)n}$$
and
$$
A_{3}(x,\omega):=\sup_{n\geq 0}\Vert D\Phi^{-n}(x,\omega)\Vert e^{(\lambda_{3}-\epsilon)n}.
$$
It is straightforward to verify that $\vert\cdot\vert_{x,\omega,3}$ and $A_{3}(x,\omega)$ satisfy the third estimate in condition~$(a)$, as well as conditions $(b)$ and $(c)$.

To conclude, for $v=(v_{1},v_{2},v_{3})$ with $v_{1}\in E_{1}(x,\omega)$, $v_{2}\in E_{2}(x,\omega)$ and $v_{3}\in E_{3}(x,\omega)$, we define $$
\vert v\vert'_{x,\omega}:=\max\left\{\vert v\vert'_{x,\omega,1},\vert v\vert'_{x,\omega,2},\vert v\vert'_{x,\omega,3}\right\},\qquad A(x,\omega):=\max\left\{A_{1}(x,\omega),A_{2}(x,\omega),A_{3}(x,\omega)\right\}.
$$
The proof is complete.
\end{proof}

For any integer $1\leq k\leq d$, let $D^{k}$ be $[0,1]^{k}\subset\R^{k}$.
A smooth $k$-disc in $\R^{d}$ is a $C^{1,1}$ map $\gamma:D^{k}\rightarrow\R^{d}$.
We define the $k$-volume of $\gamma$ to be
\begin{align}
\vert\gamma\vert:=\int_{D^{k}}\vert\wedge^{k}T_{x}\gamma\vert\d x,\notag
\end{align}
where $T_{x}\gamma$ is the derivative of $\gamma$ at $x$, $\wedge^{k}T_{x}\gamma:\wedge^{k}T_{x}D^{k}\rightarrow\wedge^{k}\R^{d}$ is the induced linear map on the $d$-exterior power. Similarly, for a measurable subset $A\subset D^{k}$, we define the restricted volume
\begin{align}
\vert\gamma|A\vert:=\int_{A}\vert\wedge^{k}T_{x}\gamma\vert\d x.\notag
\end{align}
Let $\mathcal{D}$ be a collection of $C^{1,1}$-discs in $\R^{d}$ whose dimensions vary from $1$ through
$d$.
For a $C^{1,1}$-disc $\gamma$, we define the quantity
\begin{align}
V_{\text{loc}}(\gamma,n,\epsilon,\Phi(\omega)):=\sup_{x\in \R^{d}}\vert \Phi^{n}(\omega)\circ\gamma\big|\gamma^{-1}(W^{s}(x,n,\epsilon,\omega))\vert\notag
\end{align}
and the local growth rate of stable submanifolds under iteration
\begin{align}
\bar{G}_{\text{loc}}(\Phi,\epsilon):=\sup_{\gamma\in\mathcal{D}}\limsup_{n\rightarrow\infty}\dfrac{1}{n}\int_{\Omega}\ln^{+}V_{\text{loc}}(\gamma,n,\epsilon,\Phi(\omega))\d\P(\omega),\notag
\end{align}
where $\ln^{+}x=\max\left\{1,x\right\}$. Observe that $\gamma^{-1}(W^{s}(x,n,\epsilon,\omega))$ is the set of points $y$ in $D^{k}$ such that $d(\Phi^{j}(\omega)\circ\gamma(x),\Phi^{j}(\omega)\circ\gamma(y))<\epsilon$ for $0\leq j\leq n-1$ and $\vert \Phi^{n}(\omega)\circ\gamma\big|\gamma^{-1}(W^{s}(x,n,\epsilon,\omega))\vert$ is the  $k$-dimensional volume of the $\Phi^{n}(\omega)\circ\gamma$-image of this
set.

For deterministic dynamical systems, Newhouse \cite{Newhouse-1988} established a relationship between entropy and the growth rate of the volumes of iterated submanifolds. Following his approach, we derive an analogous relation for SDEs, linking local entropy to volume growth of local stable manifolds.

\begin{theorem}\label{lem.3.5}
For equation \eqref{equ.1.1},  let coefficients $A,B$ belong to $ C^{1,1}(\R^{d})$, $\vert DA\vert,\vert DB\vert$ be bounded by a constant $K$ and $\mu$ be an invariant measure. Then for any $\epsilon>0$ and $0<\sigma<1$, given a compact subset $\Lambda\subset\R^{d}$ with $\mu(\Lambda)>\sigma$, we have
$$
r(\Phi,\epsilon,\Lambda)\leq \bar{G}_{\textup{loc}}(\Phi,\epsilon).
$$
\end{theorem}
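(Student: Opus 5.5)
We follow Newhouse's argument for the deterministic case, applied fiberwise for each fixed $\omega$ and then integrated over $\Omega$. The deterministic ingredient is the following packing principle. If $F\subset\Lambda\cap W^{s}(x,n,\epsilon,\omega)$ is $(n,\delta)$-separated, then its points can be placed inside a $d$-disc $\gamma$ (a small cube around a piece of $\Lambda$). The $\delta/2$-Bowen balls around these points are then pairwise disjoint. Each such Bowen ball, pushed forward by $\Phi^{n}(\omega)$, must still carry a definite amount of volume. Summing over the balls bounds $\operatorname{card}F$ by the volume of $\Phi^{n}(\omega)\circ\gamma$ restricted to $\gamma^{-1}(W^{s}(x,n,\epsilon+\delta,\omega))$, up to factors that are subexponential in $n$.

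\textbf{Step 1.} Cover the compact set $\Lambda$ by finitely many cubes $Q_{1},\dots,Q_{m}$ of side comparable to $\epsilon$. Regard each $Q_{j}$ as a $C^{1,1}$ $d$-disc $\gamma_{j}\in\mathcal D$. For fixed $\omega$ and $x\in\Lambda$, take a maximal $(n,\delta)$-separated set $F_{n}(x,\omega)$. By pigeonhole, some cube $Q_{j}$ contains at least $\operatorname{card}F_{n}/m$ of its points. Since $m$ does not depend on $n$, this factor disappears after dividing by $n$.

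\textbf{Step 2.} For each $y$ in this subset, consider the Bowen ball $B_{n}(y,\delta/2,\omega)=\{z:d(\Phi^{i}(y,\omega),\Phi^{i}(z,\omega))\le\delta/2,\ 0\le i\le n-1\}$. These balls are pairwise disjoint and are contained in $W^{s}(x,n,\epsilon+\delta/2,\omega)$. The key step is a lower bound
\[
\operatorname{vol}\bigl(\Phi^{n}(\omega)(B_{n}(y,\delta/2,\omega))\bigr)\ge c(\delta)\,e^{-n\eta}\,M_{n}(\omega)^{-1},
\]
where $\eta>0$ is arbitrary and $M_{n}(\omega)$ is a random factor with $\frac1n\int\ln^{+}M_{n}\,\d\P\to 0$ as $n\to\infty$ and then $\eta\to0$.

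To prove this bound, follow Newhouse and work backwards. The set $\Phi^{n}(\omega)(B_{n}(y,\delta/2))$ contains the set of points whose backward orbit under the inverse maps $\Phi^{-1}(\theta^{i}\omega)$ stays $\delta/2$-close to the orbit of $y$. This set contains a ball whose radius shrinks at most by the $C^{1}$ norms of the one-step inverse maps, so its radius is of order $\delta\prod_{i<n}\sup_{B(\cdot,\delta)}\|D\Phi^{-1}(\theta^{i}\omega)\|^{-1}$.

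This estimate is too crude, because it only yields a rate bounded by $\sum E\ln\|D\Phi^{-1}\|$. To improve it, use the $C^{1,1}$ bound from Lemma~\ref{lem.4.2}, i.e. $E|D\Phi(x)-D\Phi(y)|^{2}\le C|x-y|^{2}$, together with the moment bounds of Lemma~\ref{lem.4.2}. For small $\delta$ these show that on a $\delta$-ball $D\Phi^{\pm1}$ is $C\delta\cdot(\text{random integrable factor})$-close to its value at the center. Volume in $\R^{d}$ then transforms by $|\det D\Phi^{n}|$ up to a factor $e^{n\eta}$. Consequently,
\[
\sum_{y}\operatorname{vol}(B_{n}(y))\,|\det D\Phi^{n}|\lesssim e^{n\eta}\,\bigl|\Phi^{n}(\omega)\circ\gamma_{j}\big|\gamma_{j}^{-1}(W^{s}(x,n,2\epsilon,\omega))\bigr|.
\]
To control $\operatorname{vol}(B_{n}(y))$ from below in the case where directions other than the full-dimensional one are relevant, we use lower-dimensional discs. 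Following Newhouse, take $k$-discs tangent to the expanding directions via the Lyapunov norm of Lemma~\ref{lem.3.4}, where $E_{3}$ gives the expanding part. Along such a disc the Bowen ball has $k$-volume at least of order $\delta^{k}e^{-n\eta}A(F^{n}(x,\omega))^{-k}$, and Lemma~\ref{lem.3.4}(c) makes the factor $A$ subexponential.

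\textbf{Step 3.} Summing over $y$ gives
\[
\operatorname{card}F_{n}(x,\omega)\le m\,c(\delta)^{-1}e^{2n\eta}M_{n}(\omega)\,V_{\rm loc}(\gamma,n,2\epsilon,\Phi(\omega)).
\]
Taking $\sup_{x}$, logarithms, $\frac1n\int d\P$, $\limsup_{n}$, and then $\delta\to0$ and $\eta\to0$ yields $r(\Phi,\epsilon,\Lambda)\le\bar G_{\rm loc}(\Phi,2\epsilon)$. To recover $\epsilon$ in place of $2\epsilon$, rerun the argument with $\epsilon/2$ in the cube construction; the monotonicity of $W^{s}$ in $\epsilon$ makes this harmless.

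\textbf{Main obstacle.} The hard part is Step 2: the uniform lower bound on the volume of the image of a Bowen ball. The difficulty is that the SDE flow has no deterministic uniform $C^{1,1}$ bounds, only moment bounds. The first thing to try is to define good sets $\{\omega: \text{the local }C^{1,1}\text{ norm of }\Phi(\theta^{i}\omega)\text{ on }\Lambda'\le R\}$, with $\Lambda'$ a neighbourhood of $\Lambda$. Lemma~\ref{lem.4.2} combined with Kolmogorov continuity bounds the probability of the complement. Birkhoff's theorem applied to the indicator of the good set then makes the bad times a small proportion, and on bad times we use the crude bound $\ln\|D\Phi^{\pm1}\|$, which is integrable. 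As $R\to\infty$ the total contribution of the bad times goes to zero.
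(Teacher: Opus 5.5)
\textbf{There is a genuine gap.} Your Step 2 rests on a lower bound that fails as soon as there is a negative Lyapunov exponent, and this is exactly where the paper's proof takes a different route.

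The set $\Phi^{n}(\omega)B_{n}(y,\delta/2,\omega)$ is a backward Bowen ball around $\Phi^{n}(y,\omega)$. Backward iteration expands $E_{1}$, so this set is exponentially thin in the stable directions. Its $d$-volume is comparable to $\delta^{d}e^{n\chi^{-}}$, up to subexponential factors, where $\chi^{-}<0$ is the sum of the negative exponents counted with multiplicity. Already for the linear map $\mathrm{diag}(e^{\lambda},e^{-\lambda})$, both the Bowen ball and its image have area of order $\delta^{2}e^{-\lambda n}$. Hence the bound $\operatorname{vol}(\Phi^{n}(\omega)B_{n}(y,\delta/2,\omega))\geq c(\delta)e^{-n\eta}M_{n}(\omega)^{-1}$ is false. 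With the cubes $\gamma_{j}$, your counting can at best bound $r(\Phi,\epsilon,\Lambda)$ by $\bar{G}_{\rm loc}$ plus a term of size $|\chi^{-}|$. This deficit is geometric, not a lack of uniformity in $\omega$. No $C^{1,1}$ moment bound, Kolmogorov continuity, or good-set/Birkhoff argument removes it, so the ``main obstacle'' you single out is not the real one.

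The remedy you mention in passing, lower-dimensional discs adapted to the Lyapunov norm of Lemma~\ref{lem.3.4}, is indeed the paper's route. However, the step that makes it work is missing. The separated points $y\in F_{n}(x,\omega)$ are points of $\Lambda$ and in general do not lie on any given lower-dimensional disc. So ``the Bowen ball along such a disc'' is undefined for them, and nothing in your sketch converts $\operatorname{card}F_{n}(x,\omega)$ into volumes of pieces of a single disc $\gamma$.

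The paper does this with Pesin's local stable manifolds, as follows.
\begin{itemize}
\item[(i)] $W^{s}_{\rm loc}(y,\omega)$ is a $C^{1,1}$ disc tangent to $E_{1}(y,\omega)$.
\item[(ii)] One disc $W$ near $x_{j}$ is taken tangent to $E_{2}\oplus E_{3}$. The centre directions must be included, not just $E_{3}$; otherwise $\dim E_{1}+\dim W<d$ and the stable discs generally miss $W$.
\item[(iii)] $W$ meets each $W^{s}_{\rm loc}(y,\omega)$, for $y\in\Lambda\cap B_{M_{2}(\omega)}(x_{j})$, in a unique point $z(y)$ whose forward orbit shadows that of $y$.
\item[(iv)] The graph pieces $\tilde{W}_{n}(y,\omega)$ of $W$ around $z(y)$ are cut out by the Lyapunov-chart Bowen ball $\tilde{B}_{n}(y,\omega)$. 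By \eqref{equ.3.24}--\eqref{equ.3.25}, these pieces are disjoint for points separated before time $n-1-k_{1}$.
\item[(v)] Their images under $\Phi^{n}(\omega)$ have volume at least $C_{1}(\omega)e^{-4\epsilon(n-1)d}$. This holds because the $E_{3}$-part grows to size $\epsilon_{1}(F^{n}(y,\omega))$ while the $E_{2}$-part shrinks only at rate $e^{-2\epsilon n}$.
\end{itemize}
Summing over $y$ gives the comparison with $V_{\rm loc}$. Without this stable holonomy $y\mapsto z(y)$, or an equivalent device, your Step 3 inequality is unsupported.

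Separately, your final $\epsilon$-bookkeeping is off. $r(\Phi,\cdot,\Lambda)$ is nondecreasing in $\epsilon$, so $r(\Phi,\epsilon/2,\Lambda)\leq\bar{G}_{\rm loc}(\Phi,\epsilon)$ does not yield the stated inequality by monotonicity. This is harmless only in the $C^{\infty}$ application, where $\bar{G}_{\rm loc}$ vanishes.
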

\begin{proof}
When $r(\Phi,\epsilon,\Lambda)=0$, the inequality trivially holds. If $r(\Phi,\epsilon,\Lambda)>0$,
then there exists at least one positive Lyapunov exponent by the Ruelle inequality, which implies $E_{3}(x,\omega)\neq \left\{0\right\}$. By Lemma \ref{lem.3.4}, note that there exists a full-measure subset $\Gamma\subset\R^{d}\times\Omega$ such that for each $(x,\omega)\in\Gamma$ there are a measurable norm $\vert\cdot\vert'_{x,\omega}$ on $E(x,\omega)$ and a measurable function $A(x,\omega)$ satisfying the stated properties.

By Lemma \ref{lem.4.2}, we know that there exists a measurable function $B:\Omega\rightarrow \R^{+}$ such that for any $x,y\in\R^{d}$
\begin{equation}\label{equ.3.15}
\begin{aligned}
\Vert D\Phi(y,\omega)-D\Phi(x,\omega)\Vert\leq B(\omega)\vert x-y\vert\quad \P-a.s.
\end{aligned}
\end{equation}
We now work in local coordinates. For a fixed $x$, if $y\neq x\in\R^{d}$, we consider $E_{i}(x,\omega),1\leq i\leq 3$ as a subspace of $E(y,\omega)$ by translation, and we denote $\vert v\vert'_{y,\omega}=\vert v\vert'_{x,\omega}$ for any $v\in E(y,\omega)$. Then for any $v\in E_{1}(x,\omega)$, we have
\begin{align}\label{equ.3.16}
\vert D\Phi(y,\omega)v\vert'_{F(y,\omega)}=&\vert D\Phi(y,\omega)v\vert'_{F(x,\omega)}\notag\\
=&\vert D\Phi(x,\omega)v+ D\Phi(y,\omega)v-D\Phi(x,\omega)v\vert'_{F(x,\omega)}\notag\\
\leq &e^{\lambda_{1}+\epsilon}\vert v\vert'_{x,\omega}+A(F(x,\omega))\Vert D\Phi(y,\omega)-D\Phi(x,\omega)\Vert\cdot\vert v\vert'_{x,\omega} \notag\\
\leq& e^{\lambda_{1}+\epsilon}\vert v\vert'_{x,\omega}+A(F(x,\omega))B(\omega)\vert y-x\vert\cdot\vert v\vert'_{x,\omega}
\end{align}
by the definition of $\vert\cdot\vert'_{x,\omega}$ and \eqref{equ.3.15}. If we denote
$$
\bar{\epsilon}(x,\omega):=\min\Big\{1,\dfrac{e^{\lambda_{1}+2\epsilon}-e^{\lambda_{1}+\epsilon}}{A(F(x,\omega))B(\omega)}\Big\},
$$
then by \eqref{equ.3.15} and \eqref{equ.3.16}, for any $\vert y-x\vert\leq\bar{\epsilon}(x,\omega)$ and $v\in E_{1}(x,\omega)$, we have
\begin{align}
\vert D\Phi(y,\omega)v\vert'_{F(y,\omega)}\leq e^{\lambda_{1}+2\epsilon}\vert v\vert'_{y,\omega}.\notag
\end{align}
Note that
$$
\lim\limits_{n\rightarrow\infty}\dfrac{1}{n}\ln\bar{\epsilon} (F^{n}(x,\omega))=0\quad \mu\times\P-a.s.
$$
For each $(x,\omega)\in\Gamma$, let $b(x,\omega):=\inf_{n\geq 0}\bar{\epsilon} (F^{n}(x,\omega))e^{\epsilon n}$. Then we have $0<b(x,\omega)\leq \bar{\epsilon} (x,\omega)$ and $b(F(x,\omega))\geq e^{-\epsilon}b(x,\omega)$. Since $\vert\ln b(F(x,\omega)-\ln b(x,\omega)\vert$ is integrable, then it follows from the lemma in Ma$\tilde{\mathrm{n}}\acute{\mathrm{e}}$ \cite[p. 551]{Mane1981} that there exists a full-measure subset $\Gamma\subset \R^{d}\times\Omega$ on which
$$
\lim\limits_{n\rightarrow\pm\infty}\dfrac{1}{n}\ln b(F^{n}(x,\omega))=0\quad \mu\times\P-a.s.
$$
So there exists a measurable function $C(x,\omega)$ such that $b(F^{n}(x,\omega))\geq C(x,\omega)e^{\epsilon n}$ for $n\leq 0$. Letting $\epsilon_{1}(x,\omega):=\inf_{n\leq 0}b(F^{n}(x,\omega))e^{-\epsilon n}$, we can get $0<\epsilon_{1}(x,\omega)\leq b(x,\omega)$ and for any $k\in \Z$
\begin{align}\label{equ.3.26}
\epsilon_{1}(F^{k}(x,\omega))\geq e^{-\vert k\vert\epsilon}\epsilon_{1}(x,\omega).
\end{align}
Reducing $\epsilon_{1}(x,\omega)$ if necessary, we assume that for any $v\in E_{2}(x,\omega)$
\begin{align}\label{equ.3.22}
e^{-2\epsilon}\vert v\vert'_{y,\omega}\leq \vert D\Phi(y,\omega)v\vert'_{F(y,\omega)}\leq e^{2\epsilon}\vert v\vert'_{y,\omega}.
\end{align}
and for any $v\in E_{3}(x,\omega)$
\begin{align}\label{equ.3.23}
\vert D\Phi(y,\omega)v\vert'_{F(y,\omega)}\geq e^{\lambda_{3}-2\epsilon}\vert v\vert'_{y,\omega}.
\end{align}

After defining the same norms in local coordinates, we can further define diameters of sets and volumes of submanifolds in  $ B_{\epsilon_{1}(x,\omega)}(x)$. Thus $\text{diam}'(A)$ denotes the diameter of a set $A$ induced by the metric on some $ B_{\epsilon_{1}(x,\omega)}(x)$ induced by $\vert\cdot\vert'_{x,\omega}$. Since the angle function $\eta(x,\omega)$ has subexponential growth
along orbits in $\Gamma$, we may assume that there is a measurable function $B:\Gamma\rightarrow\R^{+}$ such that for any smooth $d$-disc $\gamma$ in $ B_{\epsilon_{1}(x,\omega)}(x)$
\begin{align}\label{equ.3.27}
\vert \gamma\vert\geq B(x,\omega)\vert \gamma\vert'\geq B(x,\omega)\vert \gamma\vert.
\end{align}

Next, for the compact subset $\Lambda\subset\R^{d}$, the bundles $E_{1}(x,\omega)$, $E_{2}(x,\omega)$ and $E_{3}(x,\omega)$ and the functions $\epsilon_{1}(x,\omega)$ and $B(x,\omega)$ are continuous on $\Lambda$ for almost all $\omega\in\Omega$. Let $0<M_{2}(\omega)<\frac{1}{2}\min\{\inf_{x\in\Lambda}\left\{\epsilon_{1}(x,\omega)\right\},\inf_{x\in\Lambda}\left\{B(x,\omega)\right\}\}$ be such that
\begin{align}\label{equ.3.2}
\epsilon_{1}(F^{n}(x,\omega))\geq M_{2}(\omega)e^{-\epsilon n},\qquad  B(F^{n}(x,\omega))\geq M_{2}(\omega)e^{-\epsilon n}.
\end{align}

Let $\text{exp}_{x}$ denote the exponential map associated to the Riemannian metric. Let $D_{i}(x,\omega):=\text{exp}_{x}(B_{\epsilon_{1}(x,\omega)}(0)\cap E_{i}(x,\omega)),1\leq i\leq 3$ and $D(x,\omega):=\text{exp}_{x}(B_{\epsilon_{1}(x,\omega)}(0))$. For $\sigma_{i}\in(0,1),$ $i=1,2,3,$ let
$$
\sigma_{i}E_{i}(x,\omega):=\left\{v\in E^{i}(x,\omega):\vert v\vert'_{x,\omega}\leq\sigma_{i}\right\}
$$
and
$$
\sigma_{1}D_{1}(x,\omega)\times \sigma_{2}D_{2}(x,\omega)\times \sigma_{3}D_{3}(x,\omega):=\text{exp}_{x}(\sigma_{1}E_{1}(x,\omega)\times \sigma_{2}E_{2}(x,\omega)\times \sigma_{3}E_{3}(x,\omega)).
$$
Let $\pi_{i}:D(x,\omega)\rightarrow D_{i}(x,\omega)$ be the projection for $i=1,2,3$. Denote
$$
\text{Image }g:=\left\{(g(u,v),u,v):u\in\sigma_{2}D_{2}(x,\omega),v\in\sigma_{3}D_{3}(x,\omega)\right\},
$$
where $g:\sigma_{2}D_{2}(x,\omega)\times \sigma_{3}D_{3}(x,\omega)\rightarrow\sigma_{1}D_{1}(x,\omega)$ is a $C^{1,1}$ function.

It is stated in Liu and Qian \cite[p. 64]{Liupeidong2} that if for any $(x,\omega)\in\Gamma$, define
\begin{align}
W_{\text{loc}}^{s}(x,\omega):=\left\{y\in B_{\epsilon_{1}(x,\omega)}(x):d(\Phi^{n}(x,\omega),\Phi^{n}(y,\omega))\leq\epsilon_{1}(x,\omega)e^{(\lambda_{1}+\epsilon)n}, \text{ for } n\geq 0\right\},\notag
\end{align}
then $W_{\text{loc}}^{s}(x,\omega)$ is a $C^{1,1}$-disc tangent at $x$ in $E_{1}(x,\omega)$.
For each $x\in\Lambda$ choose a $C^{1}$-disc $W(x,\omega)$ tangent at $x$ to $E_{2}(x,\omega)\oplus E_{3}(x,\omega)$. For almost every $\omega\in\Omega$,  assume $M_{2}(\omega)$ small enough that if $x\in\Lambda$ and $y\in B_{M_{2}(\omega)}(x)\cap\Lambda$, then $W_{\text{loc}}^{s}(x,\omega)\cap W(y,\omega)\neq\emptyset$.

Note that
$$
r(\Phi,\epsilon,\Lambda)=\lim_{\delta\rightarrow 0}\limsup_{n\rightarrow\infty}\dfrac{1}{n}\int_{\Omega}\ln \sup_{x\in\Lambda}\text{card} (F_{n}(x,\omega))\d\P(\omega),
$$
where $F_{n}(x,\omega)$ is a maximal $(n,\delta)$-$\text{separated set in }\Lambda\cap W^{s}(x,n,\epsilon,\omega)$.
Then there exists some constant $\delta>0$ such that
$$
r(\Phi,\epsilon,\Lambda)\leq \epsilon_{0}+\limsup_{n\rightarrow\infty}\dfrac{1}{n}\int_{\Omega}\ln\sup_{x\in\Lambda} \text{card} (F_{n}(x,\omega))\d\P(\omega).
$$
Let $x_{1},x_{2},\cdots,x_{s}$ be finitely many points in $\Lambda$ such that $\Lambda\subset \cup_{i=1}^{s}B_{M_{2}(\omega)}(x_{i})$. Then there is some $x_{j}$ such that
\begin{align}
r(\Phi,\epsilon,\Lambda)\leq \epsilon_{0}+\limsup_{n\rightarrow\infty}\dfrac{1}{n}\int_{\Omega}\ln\sup_{x\in\Lambda} \text{card} (F_{n}(x,\omega)\cap B_{M_{2}(\omega)}(x_{j}))\d\P(\omega).\notag
\end{align}
Now choose a disc $\gamma$ such that $W=\text{Image}\gamma$ is $C^{1}$ near enough $W(x_{j},\omega)$ so that for any $y\in B_{M_{2}(\omega)}(x_{j})\cap \Lambda$, there exists a unique point $z(y)\in W^{s}_{\text{loc}}(y,\omega)\cap W$.

We choose $\sigma_{2},\sigma_{3}>0$ such that $W$ contains a set $W_{1}$ which contains $y$ and is the graph of a $C^{1,1}$-function $g:\sigma_{2}D_{2}(y,\omega)\times\sigma_{3}D_{3}(y,\omega)\rightarrow D_{1}(y,\omega)$ with $g(0,0)=z(y)$.
Let
\begin{align}\label{equ.3.29}
W_{n}(y,\omega):=\text{Image } g\big|e^{-4\epsilon(n-1)}\frac{\delta}{4}\epsilon_{1}(F^{n}(y,\omega))D_{2}(y,\omega)\times D_{3}(y,\omega)
\end{align}
and
\begin{align}\label{equ.3.28}
\tilde{B}_{n}(y,\omega):=\cap_{0\leq k\leq n-1} \Phi^{-k}(\omega)B_{\epsilon_{1}(F^{k}(y,\omega))}(\Phi^{k}(y,\omega)).
\end{align}
Let $\tilde{W}_{n}(y,\omega):=W_{n}(y,\omega)\cap \tilde{B}_{n}(y,\omega)$. Then, by \eqref{equ.3.22} and \eqref{equ.3.29}, we obtain
\begin{align}\label{equ.3.24}
\text{diam}'(\pi_{2}\Phi^{k}(\omega)\tilde{W}_{n}(y,\omega))\leq e^{2k\epsilon}\epsilon_{1}(F^{n}(y,\omega))e^{-4\epsilon(n-1)}\frac{\delta}{2}.
\end{align}
Moreover, by \eqref{equ.3.28}, we know that for any $0\leq k\leq n-1$
\begin{align}
\Phi^{k}(\omega)(\tilde{W}_{n}(y,\omega))\subset \Phi^{-(n-k-1)}(\theta^{n-1}\omega)( B_{\epsilon_{1}(F^{n-1}(y,\omega))}(\Phi^{n-1}(y,\omega))),\notag
\end{align}
which together with \eqref{equ.3.23} implies that
\begin{align}\label{equ.3.25}
\text{diam}'(\pi_{3}\Phi^{k}(\omega)\tilde{W}_{n}(y,\omega))\leq 2e^{-(\lambda_{3}-2\epsilon)(n-1-k)}\epsilon_{1}(F^{n}(y,\omega)).
\end{align}
Combining \eqref{equ.3.24} and \eqref{equ.3.25} yields
\begin{align}
\text{diam}(\Phi^{k}(\omega)\tilde{W}_{n}(y,\omega))\leq& \text{diam}'(\Phi^{k}(\omega)\tilde{W}_{n}(y,\omega))\notag\\
\leq &\epsilon_{1}(F^{n}(y,\omega))\max\left\{e^{2k\epsilon}e^{-4\epsilon(n-1)}\frac{\delta}{2},2e^{-(\lambda_{3}-2\epsilon)(n-1-k)}\right\}.\notag
\end{align}
Provided $n$ is large enough, one can find a constant $k_{1}\in\N^{+}$ with the property that for any $k\in [0,n-1-k_{1}]$, $\text{diam}(\Phi^{k}(\omega)\tilde{W}_{n}(y,\omega))\leq\frac{\delta}{2}$. This implies that whenever there exists some $k\in[0,n-1-k_{1}]$ such that $d(\Phi^{k}(x,\omega),\Phi^{k}(y,\omega))>\delta$, then $\tilde{W}_{n}(x,\omega)\cap\tilde{W}_{n}(y,\omega)=\varnothing$.

We select a subset
$$
F'_{n}(x,\omega)\subset F_{n}(x,\omega)\cap B_{M_{2}(\omega)}(x_{j}),
$$
which is the maximal set such that if $y\neq z\in F'_{n}(x,\omega)$, then $d(\Phi^{k}(x,\omega),\Phi^{k}(y,\omega))>\delta$ for some $k\in[0,n-1-k_{1}]$.
For any $y\in F'_{n}(x,\omega)$, it follows from \eqref{equ.3.26}, \eqref{equ.3.24} and \eqref{equ.3.25} that
\begin{align}
\vert\Phi^{n}(\omega)\tilde{W}_{n}(y,\omega)\vert'\geq& (e^{-2\epsilon(n-1)}\frac{\delta}{2}\epsilon_{1}(F^{n}(y,\omega))^{\text{dim} D_{2}} (\epsilon_{1}(F^{n}(y,\omega))^{\text{dim}D_{3}}\notag\\
=& (\frac{\delta}{2})^{\text{dim}D_{2}}(e^{-2\epsilon(n-1)})^{\text{dim} D_{2}}(\epsilon_{1}(F^{n}(y,\omega)))^{\text{dim}D_{2}+\text{dim}D_{3}}.\notag
\end{align}
Moreover, by \eqref{equ.3.27} and \eqref{equ.3.2}, we obtain
\begin{align}
\vert\Phi^{n}(\omega)\tilde{W}_{n}(y,\omega)\vert\geq& B(F^{n}(y,\omega))\vert\Phi^{n}(\omega)\tilde{W}_{n}(y,\omega)\vert'\notag\\
\geq &(\frac{\delta}{2})^{d} M_{2}(\omega)e^{-\epsilon(n-1)}e^{-2\epsilon(n-1)d}(M_{2}(\omega))^{d}e^{-\epsilon(n-1)d}\notag\\
\geq&(\frac{\delta}{2})^{d}(M_{2}(\omega))^{d+1} e^{-4\epsilon(n-1)d},\notag
\end{align}
which implies
$$
\vert \Phi^{n}(\omega)(W\cap\Lambda\cap W^{s}(n,\epsilon,x,\omega))\vert\geq (\text{card}F'_{n}(x,\omega))C_{1}(\omega) e^{-4\epsilon(n-1)d},
$$
where $C_{1}(\omega)=(\frac{\delta}{2})^{d}(M_{2}(\omega))^{d+1}$ is a measurable function.
Notice that
\begin{align}
&\limsup\limits_{n\rightarrow\infty}\frac{1}{n}\int_{\Omega}\ln^{+}\sup_{x\in\R^{d}}\vert \Phi^{n}(\omega)(W\cap\Lambda\cap W^{s}(n,\epsilon,x,\omega))\d\vert\P(\omega)\notag\\
\geq& \limsup\limits_{n\rightarrow\infty}\frac{1}{n}\int_{\Omega}\sup_{x\in\Lambda}\ln^{+}(\text{card}F'_{n}(x,\omega))C_{1}(\omega)e^{-4\epsilon(n-1)d}\d\P(\omega)\notag\\
=&\limsup\limits_{n\rightarrow\infty}\frac{1}{n}\int_{\Omega}\sup_{x\in\Lambda}\ln^{+}(\text{card}F_{n}(x,\omega))C_{1}(\omega)e^{-4\epsilon(n-1)d}\d\P(\omega),\notag
\end{align}
which leads to
$$
r(\Phi,\epsilon,\Lambda)\leq\bar{G}_{\text{loc}}(\Phi,\epsilon).
$$
The proof is complete.
\end{proof}

By employing the result of Kifer and Yomdin \cite{Kifer-Yomdin}, we can obtain an upper bound for the volume growth along local stable submanifolds.
\begin{lemma}\label{lem.3.6}
For equation \eqref{equ.1.1},  let coefficients $A,B$ belong to $ C^{r}(\R^{d})$ for some integer $1\leq r<\infty$, and their derivatives up to $r$-th order are bounded. If
$$
\int_{\Omega}\ln^{+}\Vert D\Phi(\omega)\Vert_{C^{r-1}}\d\P(\omega)<+\infty,
$$
then for any $\epsilon>0$
$$
\bar{G}_{\textup{loc}}(\Phi,\epsilon)\leq \dfrac{d}{r}R(\Phi),
$$
where
$$
R(\Phi):=\limsup\limits_{n\rightarrow\infty}\dfrac{1}{n}\ln^{+}\sup_{x}\Vert D\Phi^{n}(x,\omega)\Vert\leq\int_{\Omega}\ln^{+}\sup_{x}\Vert D\Phi(x,\omega)\Vert\d\P(\omega).
$$
In particular, for $A,B\in C^{\infty}(\R^{d})$ with bounded derivatives of all orders, we have
$$
\bar{G}_{\textup{loc}}(\Phi,\epsilon)=0.
$$
\end{lemma}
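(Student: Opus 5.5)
We will deduce Lemma~\ref{lem.3.6} from the random Yomdin theory of Kifer and Yomdin \cite{Kifer-Yomdin}, applied to the i.i.d. sequence of random $C^{r}$ maps $\Phi(\theta^{i}\omega)$, $i\ge0$. The key point is that $\gamma^{-1}(W^{s}(x,n,\epsilon,\omega))$ only involves pieces of $\gamma$ whose first $n$ iterates stay in $\epsilon$-balls. This makes the volume estimate purely local, so Yomdin's semialgebraic reparametrization lemma can be run ball by ball.

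The first step is to fix a $C^{1,1}$-disc $\gamma\in\mathcal D$ of dimension $k$ and $\omega$. At each time $j$ we cover $\Phi^{j}(\omega)\circ\gamma$ restricted to the local stable set by $C^{r}$-reparametrizations $\psi:D^{k}\to D^{k}$ such that $\Phi^{j}(\omega)\circ\gamma\circ\psi$ has $C^{r}$-norm at most $1$, after rescaling by $\epsilon$. The Yomdin--Gromov algebraic lemma, in the random form of \cite{Kifer-Yomdin}, bounds the number of new reparametrizations needed at step $j+1$ by
\[
C(d,r)\,\max\{1,\Vert D\Phi(\theta^{j}\omega)\Vert_{C^{0}}\}^{d/r}\cdot P\big(\Vert D\Phi(\theta^{j}\omega)\Vert_{C^{r-1}}\big),
\]
where $P$ grows at most polynomially in the $C^{r-1}$-norm of the derivative restricted to an $\epsilon$-ball, uniformly in the center because the coefficients have bounded derivatives. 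Multiplying over $j=0,\dots,n-1$ bounds the number of pieces, and each piece contributes volume at most a constant. This gives
\[
\ln^{+}V_{\mathrm{loc}}(\gamma,n,\epsilon,\Phi(\omega))\le C+\sum_{j=0}^{n-1}\Big(\tfrac{d}{r}\ln^{+}\sup_{x}\Vert D\Phi(x,\theta^{j}\omega)\Vert+c\ln^{+}\Vert D\Phi(\theta^{j}\omega)\Vert_{C^{r-1}}\Big).
\]
The main obstacle is the second, non-leading term, which does not carry the factor $d/r$. We handle it as in \cite{Kifer-Yomdin} by passing to the iterate $\Phi^{N}$ with $N$ large and applying the bound to $\Phi^{N}$. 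The $d/r$ factor then multiplies $\ln\sup_x\Vert D\Phi^{N}\Vert$. The correction term, divided by $N$, tends to zero thanks to the integrability hypothesis $\int\ln^{+}\Vert D\Phi(\omega)\Vert_{C^{r-1}}d\P<\infty$ and the cocycle property, since Fa\`a di Bruno bounds $\Vert D\Phi^{N}\Vert_{C^{r-1}}$ by products of $\Vert D\Phi(\theta^{j}\omega)\Vert_{C^{r-1}}$, so its log is at most linear in $N$ with a constant independent of the sequence used in Yomdin's estimate.

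Next we integrate over $\Omega$, divide by $n$ and let $n\to\infty$. By Kingman's subadditive ergodic theorem applied to $\ln^{+}\sup_x\Vert D\Phi^{n}(x,\omega)\Vert$, the leading term converges to $R(\Phi)$. This yields $\bar G_{\mathrm{loc}}(\Phi,\epsilon)\le \frac{d}{r}R(\Phi)+o(1)$, and the $o(1)$ vanishes as $N\to\infty$. We then take the supremum over $\gamma$. The inequality $R(\Phi)\le\int\ln^{+}\sup_x\Vert D\Phi(x,\omega)\Vert d\P$ follows from submultiplicativity $\sup_x\Vert D\Phi^{n}\Vert\le\prod_{j}\sup_x\Vert D\Phi(\theta^{j}\omega)\Vert$ together with Birkhoff's theorem.

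Finally, assume $A,B\in C^{\infty}$ with all derivatives bounded. Then the hypotheses hold for every $r$, because the integrability of $\ln^{+}\Vert D\Phi\Vert_{C^{r-1}}$ follows from the moment bounds of Lemma~\ref{lem.4.2} applied to the higher-order derivative equations. Moreover $R(\Phi)<\infty$ is independent of $r$. Letting $r\to\infty$ gives $\bar G_{\mathrm{loc}}(\Phi,\epsilon)\le0$, and since the quantity is nonnegative it equals $0$.
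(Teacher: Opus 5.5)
Your reduction to Kifer--Yomdin is the route the paper intends; the paper gives no argument beyond citing \cite{Kifer-Yomdin}. The step where you pass to $\Phi^{N}$, however, does not work. Apply your one-step estimate to $\Phi^{N}$ on blocks and divide by $n=mN$. This leaves the remainder
\[
\frac{c}{N}\int_{\Omega}\ln^{+}\Vert D\Phi^{N}(\omega)\Vert_{C^{r-1}}\,\d\P(\omega).
\]
A Fa\`a di Bruno bound that is linear in $N$ only makes this $O(1)$, not $o(1)$. In fact $\Vert D\Phi^{N}(\omega)\Vert_{C^{r-1}}\geq\sup_{x}\vert D\Phi^{N}(x,\omega)\vert$, so up to $O(1/N)$ the remainder is at least $\frac{c}{N}\int_{\Omega}\ln^{+}\sup_{x}\Vert D\Phi^{N}(x,\omega)\Vert\,\d\P$, which tends to $cR(\Phi)$. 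It does not vanish, and as you write it, it carries no factor $1/r$. You therefore only get $\frac{d}{r}R(\Phi)$ plus a term that does not decay in $r$, and the limit $r\to\infty$ gives nothing.

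The missing idea is the rescaling in Yomdin's argument. The polynomial factor comes from the higher derivatives of the map rescaled to an $\epsilon$-ball, namely $\epsilon^{s-1}\Vert D^{s}\Phi^{N}\Vert$ for $2\leq s\leq r$. One removes it by letting the scale shrink with $N$ (for random maps, also with the size of $\Vert D\Phi^{N}(\omega)\Vert_{C^{r-1}}$) until these terms are dominated by $\Vert D\Phi^{N}\Vert$. Each block then costs at most $C(r,d)\Vert D\Phi^{N}\Vert^{d/r}$ pieces, with $C(r,d)$ independent of $N$. This controls only $\lim_{\epsilon\to0}\bar{G}_{\textup{loc}}(\Phi,\epsilon)$.

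No argument can give the bound for every fixed $\epsilon$. Take $B=0$ and a $C^{\infty}$ vector field on $\R^{3}$ with bounded derivatives whose flow has a compact hyperbolic set $K$ with $h_{\mathrm{top}}(\Phi|_{K})>0$ and $\mathrm{diam}\,K<\epsilon$ (a suspended horseshoe, rescaled in space). The $\epsilon$-Bowen balls of points of $K$ contain every point whose orbit stays in a small neighbourhood of $K$. A local strong-unstable disc through a point of $K$ then gives $\bar{G}_{\textup{loc}}(\Phi,\epsilon)\geq h_{\mathrm{top}}(\Phi|_{K})>0$. Meanwhile $\frac{d}{r}R(\Phi)\to0$ as $r\to\infty$, so the stated inequality fails for large $r$.

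There are two further problems. First, $\mathcal{D}$ consists of $C^{1,1}$ discs, but your reparametrization step needs $\Phi^{j}(\omega)\circ\gamma\circ\psi$ to have controlled $C^{r}$ norm. That requires $\gamma\in C^{r}$; for a $C^{1,1}$ disc you get at best the count for $r=2$. This is not cosmetic. Take $B=0$, $A(x)=(ax_{1},-ax_{2})$, and $\gamma(t)=(g(t),t)$. On disjoint intervals $I_{n}$ of length $\asymp n^{-2}$, let $g$ oscillate with amplitude $w_{n}=\frac{\epsilon}{2}e^{-an}$ and wavelength $\sqrt{w_{n}}$.
\begin{itemize}
\item $g''$ is bounded, so $\gamma$ is a $C^{1,1}$ disc.
\item $I_{n}\subset\gamma^{-1}(W^{s}(x_{n},n,\epsilon))$ for $x_{n}$ the point of $\gamma$ over the midpoint of $I_{n}$.
\item Hence $V_{\textup{loc}}(\gamma,n,\epsilon,\Phi)\geq e^{an}\int_{I_{n}}\vert g'\vert\gtrsim n^{-2}e^{an/2}$.
\end{itemize}
This gives $\bar{G}_{\textup{loc}}(\Phi,\epsilon)\geq a/2>\frac{2}{r}R(\Phi)=\frac{2a}{r}$ for $r\geq5$, for every $\epsilon$. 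The discs must therefore be $C^{r}$ (resp.\ $C^{\infty}$), as in Newhouse.

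Second, integrability of $\ln^{+}\Vert D\Phi(\omega)\Vert_{C^{r-1}}$ for all $r$ does not follow from Lemma~\ref{lem.4.2}. Its moment bounds are pointwise in $x$ and do not control $\sup_{x\in\R^{d}}$ on a non-compact space. This must remain a hypothesis, as in Theorem~\ref{the.3.1}. That hypothesis, not the bounded derivatives of the coefficients, is also what gives the uniformity in the centre of the $\epsilon$-balls that your argument uses.
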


\begin{theorem}\label{the.3.1}
For equations \eqref{equ.1.1} and \eqref{equ.1.2}, let $A,B,A_{k},B_{k}$ be $C^{\infty}$ functions with bounded derivatives of all orders. Suppose that $\mu_{k}$ converges to $\mu$ under the $\text{weak}^{\ast}$-topology.
If for every $k\in \N^{+}$ and any integer $1\leq r<\infty$,
\begin{align}\label{pro.3.1}
\int_{\Omega}\ln^{+}\Vert D\Phi_{k}(\omega)\Vert_{C^{r-1}}\d\P(\omega)<+\infty
\end{align}
and
$$
\lim\limits_{k\rightarrow\infty}\Vert A_{k}-A\Vert_{C^{0}}+\Vert B_{k}-B\Vert_{C^{0}}=0,
$$
then the measure-theoretic entropy is upper semicontinuous:
\begin{equation}
\limsup\limits_{k\rightarrow\infty}h_{\mu_{k}}(\Phi_{k})\leq h_{\mu}(\Phi).\notag
\end{equation}
\end{theorem}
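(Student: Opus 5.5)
Following Newhouse's deterministic argument, we will combine Lemma~\ref{lem.3.3}, Theorem~\ref{lem.3.5} and Lemma~\ref{lem.3.6}, applied to each $\Phi_k$. The goal is the bound
\begin{align}
h_{\mu_k}(\Phi_k)\leq h_{\mu_k}(\Phi_k,\beta)+\bar{G}_{\text{loc}}(\Phi_k,\epsilon)=h_{\mu_k}(\Phi_k,\beta),\notag
\end{align}
valid for one fixed partition $\beta$ with small diameter on a compact set. The conclusion then follows once we show that $k\mapsto h_{\mu_k}(\Phi_k,\beta)$ is upper semicontinuous at $k=\infty$.

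\textbf{Reduction to a fixed partition.} Fix $\epsilon>0$ and $0<\sigma<1$. Weak$^\ast$ convergence makes $\{\mu_k\}$ tight, so we can choose a compact $\Lambda$ (a large closed ball) with $\mu_k(\Lambda)>\sigma$ for all $k$ and $\mu(\Lambda)>\sigma$. We then choose a finite partition $\beta$ of $\R^d$ with $\textup{diam}(\beta)<\epsilon$ on $\Lambda$ (cubes of side $<\epsilon/\sqrt d$ covering $\Lambda$, plus the complement) whose boundaries satisfy $\mu(\partial\beta)=0$. This is possible because only countably many grid translates can carry $\mu$-mass on their boundary hyperplanes.

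By Lemma~\ref{lem.3.3} applied to $\Phi_k,\mu_k$, $h_{\mu_k}(\Phi_k)\le h_{\mu_k}(\Phi_k,\beta)+r(\Phi_k,\epsilon,\Lambda)$. Theorem~\ref{lem.3.5} gives $r(\Phi_k,\epsilon,\Lambda)\le\bar G_{\text{loc}}(\Phi_k,\epsilon)$; its hypotheses hold since $C^\infty$ with bounded derivatives implies $C^{1,1}$. Lemma~\ref{lem.3.6}, with integrability assumption \eqref{pro.3.1} for every $r$, gives $\bar G_{\text{loc}}(\Phi_k,\epsilon)\le \frac dr R(\Phi_k)$ for all $r$, hence $0$. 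The finiteness of $R(\Phi_k)$ comes from the bound $\int\ln^+\sup_x\|D\Phi_k\|\,d\P$, which is dominated by the $r=1$ case of \eqref{pro.3.1}. Hence $h_{\mu_k}(\Phi_k)\le h_{\mu_k}(\Phi_k,\beta)$ for all $k$.

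\textbf{Upper semicontinuity of the fixed-partition entropy.} By Remark~\ref{rem.2.6}(1),
\begin{align}
h_{\mu_k}(\Phi_k,\beta)\le \frac1n\int_\Omega H_{\mu_k}\Big(\bigvee_{i=0}^{n-1}\Phi_k^{-i}(\omega)\beta\Big)\d\P \notag
\end{align}
for every $n$. So it suffices to show, for each fixed $n$, that the right-hand side converges to the corresponding quantity for $(\Phi,\mu)$. Then $\limsup_k h_{\mu_k}(\Phi_k)\le \frac1n\int H_\mu(\bigvee\Phi^{-i}\beta)\d\P$, and taking $\inf_n$ gives $\le h_\mu(\Phi,\beta)\le h_\mu(\Phi)$.

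Each atom of the join is $\{x:\Phi_k^i(x,\omega)\in C_i,\ 0\le i<n\}$. Its $\mu_k$-measure, integrated in $\omega$, can be handled through the joint law of $(x,\Phi_k^1(x),\dots,\Phi_k^{n-1}(x))$ under $\mu_k\times\P$. Lemma~\ref{lem.3.1}, uniform in $x$, combined with $\mu_k\to\mu$ weakly, should give weak convergence of these joint laws. Each Markov chain marginal is $\mu$ or $\mu_k$ by invariance, so the boundaries $\partial C_i$ are null for the limit law. The portmanteau theorem then yields convergence of the measures of the atoms.

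\textbf{Main obstacle.} The difficulty is that $H_{\mu}(\cdot)$ is evaluated $\omega$-by-$\omega$, not on averaged probabilities. Since $-t\ln t$ is concave, convergence of $\int\mu_k(\text{atom}_\omega)\d\P$ alone is not enough. The first approach to try is to prove convergence in probability (in $\omega$) of $\mu_k(\{x:\Phi_k^i(x,\omega)\in C_i\ \forall i\})$. To do so, approximate the indicators by continuous functions $\phi$ and use
\begin{align}
\int\!\!\int |\phi(\Phi_k^i(x,\omega))-\phi(\Phi^i(x,\omega))|\,\d\mu_k\,\d\P\to0,\notag
\end{align}
which follows from Lemma~\ref{lem.3.1} and uniform continuity on compacts together with tightness. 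We also need $\int\phi(\Phi^i(x,\omega))\d\mu_k\to\int\phi(\Phi^i(x,\omega))\d\mu$ for a.e.\ $\omega$, which holds by continuity of $x\mapsto\Phi^i(x,\omega)$. The boundary error is controlled by $\mu(\partial\beta)=0$ through invariance. Bounded convergence, using $H\le n\ln\operatorname{card}\beta$, then passes the limit through $\int_\Omega$. Finally, letting $\epsilon\to0$ is unnecessary, since the bound already holds for a fixed $\epsilon$.
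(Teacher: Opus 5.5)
Your proposal is correct and follows the paper's proof: Lemma~\ref{lem.3.3} with a fixed partition $\beta$ satisfying $\mu(\partial\beta)=0$, the local term $r(\Phi_k,\epsilon,\Lambda)$ killed by Theorem~\ref{lem.3.5} and Lemma~\ref{lem.3.6}, and upper semicontinuity of $h_{\mu_k}(\Phi_k,\beta)$ via the infimum formula of Remark~\ref{rem.2.6} together with convergence of the $n$-step integrated entropies. Your handling of that last step is more careful than the paper's, which simply asserts $\P$-a.s.\ convergence from the mean-square estimate of Lemma~\ref{lem.3.1}: you prove convergence in probability in $\omega$ using continuous approximants, control boundary errors by invariance, and then apply bounded convergence.
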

\begin{proof}
According to Lemma \ref{lem.3.3}, we know that for any $k>0$
\begin{align}\label{equ.3.17}
h_{\mu_{k}}(\Phi_{k})\leq& h_{\mu_{k}}(\Phi_{k},\beta)+r(\Phi_{k},\epsilon,\Lambda).
\end{align}
For the first term above, by Remark \ref{rem.2.6}, we have
\begin{align}
h_{\mu_{k}}(\Phi_{k},\beta)=\inf_{n\geq1}\dfrac{1}{n}\int_{\Omega}H_{\mu_{k}}(\bigvee_{i=0}^{n-1}\Phi_{k}^{i}(\omega)\beta)\d\P(\omega).\notag
\end{align}
We consider the continuity of $\int_{\Omega}H_{\mu_{k}}(\bigvee_{i=0}^{n-1}\Phi_{k}^{i}(\omega)\beta)\d\P(\omega)$ with respect to $k$ for a fixed $n$. By Lemma \ref{lem.3.1}, we know that for any $x\in\R^{d}$ and $t\in[0,T]$
\begin{align}
\lim\limits_{k\rightarrow\infty}\Vert \Phi_{k}(x,t,\omega)-\Phi(x,t,\omega)\Vert=0,\qquad \P-a.s.\notag
\end{align}
Suppose that $\mu(\partial C)=0$ for $C\in\beta$. Together with the property that $\mu_{k}$ converges to $\mu$ under the $\text{weak}^{\ast}$-topology, we have, for any $n\in\N^{+}$
\begin{align}
\lim\limits_{k\rightarrow\infty}\vert H_{\mu_{k}}\Big(\bigvee_{i=0}^{n-1}\Phi_{k}^{i}(\omega)\beta\Big)-H_{\mu}\Big(\bigvee_{i=0}^{n-1}\Phi^{i}(\omega)\beta\Big)\vert=0,\qquad \P-a.s.,\notag
\end{align}
which implies by the dominated convergence theorem that
\begin{align}
\lim\limits_{k\rightarrow\infty}\int_{\Omega}\vert H_{\mu_{k}}\Big(\bigvee_{i=0}^{n-1}\Phi_{k}^{i}(\omega)\beta\Big)-H_{\mu}\Big(\bigvee_{i=0}^{n-1}\Phi^{i}(\omega)\beta\Big)\vert\d\P(\omega)=0.\notag
\end{align}
As we know, the infimum of continuous functions is upper semicontinuous. Then the entropy with partition $\beta$ is upper semicontinuous
\begin{align}
\limsup\limits_{k\rightarrow\infty}h_{\mu_{k}}(\Phi_{k},\beta)\leq h_{\mu}(\Phi,\beta).\notag
\end{align}
Consequently, for any $\epsilon_{1}>0$ and sufficiently large $k$, we have
\begin{align}\label{equ.3.5}
h_{\mu_{k}}(\Phi_{k},\beta)\leq h_{\mu}(\Phi,\beta)+\epsilon_{1}.
\end{align}

Moreover, for the second term of \eqref{equ.3.17}, it follows from Theorem \ref{lem.3.5} and Lemma \ref{lem.3.6} that
\begin{align}\label{equ.3.4}
r(\Phi_{k},\epsilon,\Lambda)\leq\bar{G}_{\textup{loc}}(\Phi_{k},\epsilon)\leq \dfrac{d}{r}R(\Phi_{k})=0.
\end{align}
Combining the above estimates, we have
\[
h_{\mu_{k}}(\Phi_{k})\leq h_{\mu_{k}}(\Phi_{k},\beta) \leq h_{\mu}(\Phi,\beta)+\epsilon_{1} \leq h_{\mu}(\Phi)+\epsilon_{1},
\]
where the first inequality follows from \eqref{equ.3.17} and \eqref{equ.3.4}, and the second from \eqref{equ.3.5}. The proof is complete.
\end{proof}
\begin{remark}\rm
Consider the following $d$-dimensional SDEs
\begin{equation}\label{m-dim}
\d X=A(X)\d t+\bar{B}(X)\d \bar{W}, \qquad X(0)\in\R^{d}
\tag{E.1}
\end{equation}
and
\begin{equation}\label{m-dimk}
\d X=A_{k}(X)\d t+\bar{B}_{k}(X)\d \bar{W}, \qquad X(0)\in\R^{d},\qquad k\in\N^{+},
\tag{E.2}
\end{equation}
where $A,A_{k}:\R^{d}\rightarrow\R^{d}$ and  $B,\bar{B}_{k}:\R^{d}\rightarrow\R^{d\times m}$ for some integer $m\geq 1$, and $\bar{W}:=(W_{i})_{1\leq i\leq m}$ is an $m$-dimensional Brownian motion. The conclusions  regarding equations \eqref{equ.1.1} and \eqref{equ.1.2} in this section apply to equations \eqref{m-dim} and \eqref{m-dimk} as well; the proof is just a modification of notations.
\end{remark}

\section{Applications}\label{sec.4}
In this section, we give two applications. From Theorem \ref{the.3.1}, note that to establish the upper semicontinuity of entropy, the key steps are to verify that $\mu_{k}$ converges to $\mu$ under the $\text{weak}^{\ast}$-topology and that condition \eqref{pro.3.1} holds.  Firstly, we give a sufficient condition to guarantee the continuity of invariant measures with respect to coefficients---the Lyapunov function condition. Furthermore, for the SDEs driven by additive noise with smooth coefficients, the condition \eqref{pro.3.1} naturally holds, so the upper semicontinuity of the measure-theoretic entropy
follows.

\subsection{Continuity under the Lyapunov function condition}
Now we introduce the notion of uniform Lyapunov functions from Huang et al \cite{Huang1} and \cite{Huang2}.
A function $V:\R^{d}\to\R$ is called \textit{Lyapunov function} with respect to equation \eqref{m-dim} if $V$ is a nonnegative $C^{2}$ function such that
$$
\lim_{x\rightarrow\infty}V(x)=+\infty,
$$
and for a bounded open set $U\subset \R^{d}$ with regular boundary, there exists some constant $\gamma>0$ satisfying
$$
\mathcal{L}V (x)\leq-\gamma \quad \hbox{for } x\in \R^{d}\setminus U,
$$
where $\mathcal{L}$ is the adjoint Fokker--Planck operator generated by equation \eqref{m-dim}:
\begin{align}
\mathcal{L}V :=\sum_{i=1}^d a^i \frac{\partial V}{\partial x_i}+\frac{1}{2} \sum_{i,j=1}^d b^{ij} \frac{\partial^2 V}{\partial x_i \partial x_j}\notag
\end{align}
with $A=(a^{1},\cdots,a^{d})^{\intercal}$ and $\bar{B}\bar{B}^{\intercal}=(b^{ij})_{1\leq i,j\leq d}$. A function is a \textit{uniform Lyapunov function} on $\R^{d}$ with respect to equations \eqref{m-dimk} if it is a Lyapunov function with respect to each equation \eqref{m-dimk}, and $\gamma$ and $U$ are independent of $k$. Then we have the following continuity result on invariant measures.

\begin{prop}\label{pro.4.2}
For equations \eqref{m-dim} and \eqref{m-dimk}, assume that there exist unique solutions $\Phi$ and $\Phi_{k}$, respectively. Suppose that there is a uniform Lyapunov function for \eqref{m-dim} and \eqref{m-dimk}, and that $\bar{B}(x)\bar{B}^{\intercal}(x), \bar{B}_{k}(x)\bar{B}_{k}^{\intercal}(x)$  are invertible whenever $x\in U$.  Then there exist unique invariant measures $\mu$ and $\mu_{k}$ for equations \eqref{m-dim} and \eqref{m-dimk}, respectively. Furthermore, if
$$
\lim\limits_{k\rightarrow\infty}\vert A_{k}(x)-A(x)\vert+\vert \bar{B}_{k}(x)-\bar{B}(x)\vert=0\quad \text{point-wise},
$$
then $\mu_{k}$ converges to $\mu$
under the $\text{weak}^{\ast}$-topology.
\end{prop}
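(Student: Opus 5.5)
The proof has two parts: existence and uniqueness of $\mu$ and $\mu_k$, and weak$^\ast$ convergence $\mu_k\to\mu$.

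\emph{Existence and uniqueness.} I would follow the Khasminskii approach, as in Huang et al.\ \cite{Huang1,Huang2}. Fix $k$ and a starting point $x$. Apply It\^o's formula to $V(\Phi_k(x,t))$ up to the hitting time $\tau_U$ of $U$. The inequality $\mathcal{L}_kV\le-\gamma$ outside $U$ gives
$$E\tau_U\le V(x)/\gamma,$$
so the process is positive recurrent to $U$. Inside $U$ the diffusion matrix $\bar B_k\bar B_k^{\intercal}$ is invertible, so the process is locally nondegenerate there. The standard Khasminskii construction, through the embedded chain on $\partial U'$ for some $U'\Supset U$, then yields an invariant measure. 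Uniqueness follows because nondegeneracy on $U$, together with recurrence of every point to $U$, makes any two ergodic measures share support and hence coincide (Doob's theorem applied to the strong Feller chain restricted to $U$). The same argument applies to \eqref{m-dim}.

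\emph{Tightness.} The key uniform estimate is a bound on the occupation measure. By Dynkin's formula,
$$\gamma\int_0^T P(\Phi_k(x,s)\notin U)\,\d s\le V(x)+T\sup_{\bar U}|\mathcal{L}_kV|.$$
I would instead use the cleaner route of Huang et al. There, for a uniform Lyapunov function, one has the bound
$$\mu_k(\R^d\setminus\{V\le\rho\})\le C/\rho$$
with $C$ independent of $k$. This comes from the level-set measure estimate for stationary Fokker--Planck equations, integrating $\mathcal{L}_k^{\ast}\mu_k=0$ against cutoffs of $V$. Such a bound requires $\sup_k\sup_{\bar U}|\mathcal{L}_kV|<\infty$, which I would get from pointwise convergence plus local boundedness of the coefficients. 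Local boundedness should follow from the uniform Lipschitz and growth assumptions used in the paper; otherwise I would add it as a mild hypothesis. Since $V\to\infty$, the family $\{\mu_k\}$ is tight.

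\emph{Identification of the limit.} By Prokhorov, take a subsequence with $\mu_{k_j}\to\nu$ weakly. For $f\in C_c^\infty(\R^d)$, stationarity gives
$$\int \mathcal{L}_{k_j}f\,\d\mu_{k_j}=0.$$
The functions $\mathcal{L}_{k_j}f$ are supported in $\operatorname{supp} f$ and converge pointwise to $\mathcal{L}f$, but only pointwise, so I need uniform convergence on compacts. On the compact $\operatorname{supp} f$, pointwise convergence plus equicontinuity (uniform Lipschitz bounds) upgrades to uniform convergence. Hence $\int\mathcal{L}f\,\d\nu=0$ for all such $f$, i.e.\ $\nu$ is an infinitesimally invariant measure for \eqref{m-dim}. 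Given the Lyapunov function, infinitesimal invariance implies invariance (Bogachev--R\"ockner--Shaposhnikov type results, or the arguments in \cite{Huang2}). Uniqueness then gives $\nu=\mu$, and since every subsequence has a further subsequence converging to $\mu$, the whole sequence converges.

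\emph{Main obstacle.} The hardest step is the passage from infinitesimal invariance to genuine invariance without global boundedness of the coefficients, together with making the tightness constant uniform in $k$. If the uniformity is delicate, my fallback is to prove $P^{k}_t f(x)\to P_t f(x)$ locally uniformly using a localized version of Lemma~\ref{lem.3.1} with stopping times. I would then pass to the limit directly in $\int P^k_tf\,\d\mu_k=\int f\,\d\mu_k$ using tightness, which avoids the infinitesimal-invariance step entirely.
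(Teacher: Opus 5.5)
\textbf{The gap is uniform tightness, and it cannot be closed under the stated hypotheses.} Your overall plan is the natural reconstruction of what the paper simply cites from Huang et al.\ and Ma--Liu: Khasminskii for existence and uniqueness, then uniform tightness, then identifying every weak limit point as the invariant measure of \eqref{m-dim}. The identification step is sound once tightness is available. For it, use Echeverr\'ia's theorem (the martingale problem is well posed for Lipschitz coefficients) rather than Bogachev--R\"ockner--Shaposhnikov, since the diffusion may degenerate off $U$.

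\textbf{Why your tightness bound does not follow.} The bound $\mu_k(\{V>\rho\})\le C/\rho$ with $C$ independent of $k$ does not follow from $\mathcal{L}_kV\le-\gamma$ off $U$ together with $\sup_k\sup_{\bar U}|\mathcal{L}_kV|<\infty$.
\begin{itemize}
\item When you test $\mathcal{L}_k^{\ast}\mu_k=0$ against $\phi(V)$, you can drop the term $\tfrac12\phi''(V)|\bar{B}_k^{\intercal}\nabla V|^{2}$ only when $\phi$ is concave.
\item Letting $\phi'\uparrow 1$ then gives nothing better than $\gamma\,\mu_k(\R^d\setminus U)\le\sup_{\bar U}|\mathcal{L}_kV|$, which has no decay in $\rho$.
\item The level-set estimates of Huang et al.\ that do decay depend on the size of $\nabla V^{\intercal}\bar{B}_k\bar{B}_k^{\intercal}\nabla V$ on the far level sets $\{V=\rho\}$. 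Pointwise convergence of the coefficients gives no uniform control there.
\end{itemize}

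\textbf{A counterexample.} Take $d=m=1$ and $A_k=A$ smooth with $A(x)=-\mathrm{sign}(x)$ for $|x|\ge1$. Set $\bar{B}(x)=\sqrt{1+x^2}$ and $\bar{B}_k(x)=\sqrt{1+g_k(x)^2}$, where $R_k\to\infty$ and
\begin{itemize}
\item $g_k(x)=|x|$ for $|x|\le R_k$,
\item $g_k(x)=2R_k-|x|$ for $R_k\le|x|\le 2R_k$,
\item $g_k(x)=0$ for $|x|\ge 2R_k$.
\end{itemize}
This family satisfies every hypothesis of the statement:
\begin{itemize}
\item All coefficients are $1$-Lipschitz, so solutions exist and are unique.
\item $\bar{B}_k\ge1$, so $\bar{B}_k\bar{B}_k^{\intercal}$ is invertible everywhere.
\item $\bar{B}_k\to\bar{B}$ pointwise, even locally uniformly.
\item $V(x)=|x|$, smoothed inside $U=(-1,1)$, gives $\mathcal{L}_kV=-1$ off $U$ for every $k$ (and for the limit), so $V$ is a uniform Lyapunov function.
\end{itemize}
The invariant density is explicit: for $x\ge1$,
$p_k(x)=c_k\,\bar{B}_k(x)^{-2}\exp\bigl(-\int_1^x 2\bar{B}_k(y)^{-2}\,\d y\bigr)$, with $\inf_k c_k>0$.
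Two facts then give a uniform lower bound on far-out mass:
\begin{itemize}
\item $\int_1^{2R_k}2\bar{B}_k^{-2}\,\d y\le 3\pi/2$.
\item $\bar{B}_k=1$ on $[2R_k,3R_k]$.
\end{itemize}
Hence $\mu_k([2R_k,3R_k])\ge \tfrac12 c_k e^{-3\pi/2}(1-e^{-2})\ge c>0$ for all $k$.

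\textbf{Consequences and a fix.} So $\{\mu_k\}$ is not tight, and $\mu_k$ does not converge weakly to $\mu$: the convergence claim fails under mere pointwise convergence. Your fallback through $P^k_tf\to P_tf$ also presupposes tightness, so it does not help. What is missing is a hypothesis giving uniformity at infinity, presumably supplied by the cited references rather than by the statement. The simplest such hypothesis that makes your argument work is a strengthened uniform condition $\mathcal{L}_kV\le-W$ off $U$ with $W\to\infty$. Your concave-cutoff computation then gives $\sup_k\int_{\R^d\setminus U}W\,\d\mu_k\le\sup_k\sup_{\bar U}|\mathcal{L}_kV|<\infty$, and hence tightness.
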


\begin{proof}
Based on Huang et al \cite{Huang1}, there exist unique invariant measures $\mu$ and $\mu_{k}$ for equations \eqref{m-dim} and \eqref{m-dimk}, respectively. Furthermore, through Ma and Liu \cite{LM}, we get that $\mu_{k}$ converges to $\mu$ under the $\text{weak}^{\ast}$-topology.
\end{proof}

According to Theorem \ref{the.3.1} and Proposition \ref{pro.4.2}, we get the upper semicontinuity of entropy under the Lyapunov function condition. For the matrix-valued function $\bar{B}=(b_{1},\cdots,b_{m})$, denote $\Vert \bar{B}\Vert_{C^{0}}:=\max_{1\leq i\leq m}\Vert b_{i}\Vert_{C^{0}}$.
\begin{theorem}\label{the.4.2}
For equations \eqref{m-dim} and \eqref{m-dimk}, let $A,A_{k},\bar{B},\bar{B}_{k}$ be $C^{\infty}$ functions with bounded derivatives of all orders. Suppose that there is a uniform Lyapunov function for \eqref{m-dim} and \eqref{m-dimk}, and that $\bar{B}(x)\bar{B}^{\intercal}(x), \bar{B}_{k}(x)\bar{B}_{k}^{\intercal}(x)$  are invertible whenever $x\in U$. If for every $k\in \N^{+}$ and any integer $1\leq r<\infty$,
\begin{align}
\int_{\Omega}\ln^{+}\Vert D\Phi_{k}(\omega)\Vert_{C^{r-1}}\d\P(\omega)<+\infty\notag
\end{align}
and
$$
\lim\limits_{k\rightarrow\infty}\Vert A_{k}-A\Vert_{C^{0}}+\Vert \bar{B}_{k}-\bar{B}\Vert_{C^{0}}=0,
$$
then the measure-theoretic entropy is upper semicontinuous:
\begin{equation}
\limsup\limits_{k\rightarrow\infty}h_{\mu_{k}}(\Phi_{k})\leq h_{\mu}(\Phi).\notag
\end{equation}
\end{theorem}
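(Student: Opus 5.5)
The plan is to reduce Theorem \ref{the.4.2} to Theorem \ref{the.3.1}, in its multi-dimensional-noise form (see the Remark after Theorem \ref{the.3.1}, which says the Section~\ref{sec.3} results transfer to \eqref{m-dim} and \eqref{m-dimk}). Theorem \ref{the.3.1} has four hypotheses: $C^{\infty}$ coefficients with bounded derivatives of all orders, the integrability condition \eqref{pro.3.1}, $C^{0}$-convergence of the coefficients, and weak$^{\ast}$ convergence $\mu_k\to\mu$. The first three are assumed verbatim in Theorem \ref{the.4.2}. So the proof consists in producing the invariant measures $\mu,\mu_k$ and showing $\mu_k\to\mu$ weak$^{\ast}$, and this is what Proposition \ref{pro.4.2} supplies.

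The steps are as follows.

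First, existence and uniqueness of the solutions $\Phi,\Phi_k$. Since $A,A_k,\bar B,\bar B_k$ are $C^\infty$ with bounded first derivatives, they are globally Lipschitz. By the mean value theorem they also satisfy linear growth, $|A(x)|\le |A(0)|+\|DA\|_{C^0}|x|$. The classical existence theorem (Arnold \cite{Arnold-equation}) therefore gives unique global solutions, generating stochastic flows of diffeomorphisms.

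Second, apply Proposition \ref{pro.4.2}. Its hypotheses are the uniform Lyapunov function and the invertibility of $\bar B\bar B^{\intercal}$ and $\bar B_k\bar B_k^{\intercal}$ on $U$, both assumed. It yields unique invariant measures $\mu$ and $\mu_k$. The assumed convergence $\|A_k-A\|_{C^0}+\|\bar B_k-\bar B\|_{C^0}\to0$ implies pointwise convergence, so Proposition \ref{pro.4.2} also gives $\mu_k\to\mu$ weak$^{\ast}$. Because the invariant measures are unique, there is no ambiguity about which $\mu_k,\mu$ appear in $h_{\mu_k}(\Phi_k)$ and $h_\mu(\Phi)$.

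Third, invoke Theorem \ref{the.3.1}, with the multi-dimensional noise handled by the Remark. Its proof uses Lemma \ref{lem.3.1}, whose uniform Lipschitz constant I take from the uniform bounds on the derivatives. It also uses Lemma \ref{lem.3.6} for each $\Phi_k$, which is where \eqref{pro.3.1} and $C^\infty$ smoothness force $\bar G_{\rm loc}(\Phi_k,\epsilon)=0$. This gives $\limsup_k h_{\mu_k}(\Phi_k)\le h_\mu(\Phi)$.

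The main obstacle is the uniform Lipschitz constant $L$ required in Lemma \ref{lem.3.1}. The statement only says each $A_k,\bar B_k$ has bounded derivatives, not that the bounds are uniform in $k$. I would try first to argue that $C^0$ convergence alone gives pathwise convergence in probability on compact time intervals via a localization argument, which is enough for the partition-entropy continuity step. Failing that, I would read the hypothesis as uniform derivative bounds, as implicitly done in Theorem \ref{the.3.1}.
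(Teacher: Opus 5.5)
Your proposal is correct and follows the paper's route exactly. The paper obtains Theorem~\ref{the.4.2} simply by combining Proposition~\ref{pro.4.2} (unique invariant measures and weak$^{\ast}$ convergence $\mu_k\to\mu$) with Theorem~\ref{the.3.1} in its multi-dimensional-noise form. The uniform Lipschitz constant you flag for Lemma~\ref{lem.3.1} is not addressed in the paper either; it is already implicit in the proof of Theorem~\ref{the.3.1}, so reading the hypothesis as uniform derivative bounds leaves you at the same level of rigor as the paper.
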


\subsection{Continuity for SDEs with additive noise}
Consider the following SDEs driven by additive noise:
\begin{align}\label{equ.4.2}
\d X=A(X)\d t+\bar{B}\d \bar{W}, \quad X(0)=x\in \R^{d}
\end{align}
and
\begin{align}\label{equ.4.4}
\d X=A_{k}(X)\d t+\bar{B}_{k}\d \bar{W}, \quad X_{k}(0)=x\in \R^{d},\quad k\in\N^{+},
\end{align}
where $A,A_{k}:\R^{d}\rightarrow\R^{d}$, $\bar{B},\bar{B}_{k}\in \R^{d\times m}$ are constant matrices, and $\bar{W}:=(W_{i})_{1\leq i\leq m}$ is an $m$-dimensional Brownian motion. As described in Proposition \ref{lem.2.1}, the derivative $D \Phi(x,t)$ of solution $\Phi(x,t)$ to \eqref{equ.4.2} satisfies the following linearized equation:
\begin{equation}
D\Phi(x,t)=I+\int_{0}^{t}DA(\Phi(x,s))D\Phi(x,s)\d s,\notag
\end{equation}
where $I$ is the identity matrix.
Similarly, the $n$-th derivative satisfies the following equation
\

\begin{align}
D^{n}\Phi(x,t) =I+\int^{t}_{0}\big( DA(\Phi(x,s)) D^n \Phi(x,s) + P_n(D\Phi(x,s), \dots, D^{n-1}\Phi(x,s))\big)\d t,\notag
\end{align}
where $P_n$ is a polynomial in $D\Phi, \dots, D^{n-1}\Phi$
with coefficients depending on $D^2A, \dots, D^nA$.

Next, we will show that the solution of equation \eqref{equ.4.2}/\eqref{equ.4.4} satisfies condition \eqref{pro.3.1} under the $C^{\infty}$
regularity assumption. This indicates that the upper semicontinuity of the entropy for equation \eqref{equ.4.2} can be verified directly from the coefficients.

\begin{lemma}
For equation \eqref{equ.4.2}, let $A$ be a $C^{\infty}$ function on $\R^{d}$ with bounded derivatives of all orders. Then, for any $r\geq 1$, we have
\begin{align}
\int_{\Omega}\Vert D\Phi(\omega)\Vert_{C^{r-1}}\d\P(\omega)<+\infty.\notag
\end{align}
\end{lemma}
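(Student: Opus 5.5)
The key observation is that for additive noise the linearized equations contain no stochastic integral. For almost every $\omega$ the derivatives $D^{j}\Phi(x,t,\omega)$, $1\le j\le r$, therefore solve random ordinary differential equations along the path $s\mapsto\Phi(x,s,\omega)$. The coefficient $DA$ and the polynomial coefficients of $P_j$ (built from $D^2A,\dots,D^jA$) are bounded uniformly in $x$ and $\omega$. So I plan to prove a \emph{deterministic} bound
$$
\sup_{x\in\R^{d}}\sup_{t\in[0,1]}\vert D^{j}\Phi(x,t,\omega)\vert\le C_{j},\qquad 1\le j\le r,
$$
with $C_j$ depending only on $d$, $j$ and $\Vert A\Vert_{C^{j}}$, and not on $\omega$. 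Once this holds, $\Vert D\Phi(\omega)\Vert_{C^{r-1}}\le\max_{1\le j\le r}C_j$ almost surely, and integrating over $\Omega$ gives the claim (and a fortiori the $\ln^{+}$ version in \eqref{pro.3.1}).

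The argument is an induction on $j$ using Gronwall's inequality.

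\textbf{Case $j=1$.} The equation $D\Phi(x,t)=I+\int_0^t DA(\Phi(x,s))D\Phi(x,s)\,\d s$ gives
$$
\vert D\Phi(x,t)\vert\le C+K\int_0^t\vert D\Phi(x,s)\vert\,\d s,
$$
hence $\vert D\Phi(x,t)\vert\le Ce^{Kt}$ for $t\in[0,1]$, uniformly in $x$ and $\omega$. Here $K$ bounds $\vert DA\vert$ and $C$ absorbs the norm-equivalence constant from Section \ref{sec.2}.

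\textbf{Step $j\ge2$.} Assume $D\Phi,\dots,D^{j-1}\Phi$ are bounded by $C_1,\dots,C_{j-1}$. Then $P_j(D\Phi,\dots,D^{j-1}\Phi)$ is bounded by a constant depending on $C_1,\dots,C_{j-1}$ and $\Vert A\Vert_{C^{j}}$. The $j$-th equation then yields
$$
\vert D^{j}\Phi(x,t)\vert\le c_j+\int_0^t\big(K\vert D^{j}\Phi(x,s)\vert+M_j\big)\,\d s,
$$
and Gronwall gives $C_j$. One small correction is needed: for $j\ge2$ the initial value is $D^j\Phi(x,0)=0$ rather than $I$, but this only changes the constant.

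The main point requiring care is justifying the pathwise differentiation itself. I need $x\mapsto\Phi(x,1,\omega)$ to be $C^{r}$ almost surely, with derivatives given by the formal linearized equations for a.e. $\omega$ simultaneously for all $x$. I would handle this through the standard conjugation for additive noise. Writing $Y(t)=X(t)-\bar B\bar W(t)$ gives the random ODE $\dot Y=A(Y+\bar B\bar W(t,\omega))$. Its vector field is $C^\infty$ in $Y$ with $Y$-derivatives bounded uniformly in $(t,\omega)$, so classical ODE smooth dependence on initial data applies pathwise, and the derivative equations follow by differentiating under the integral. Alternatively, I could cite Kunita's smoothness theorem (the higher-order analogue of Proposition \ref{lem.2.1}). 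The remaining steps are routine bookkeeping of constants.
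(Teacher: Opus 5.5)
Your proof is correct and uses the same idea as the paper's: Gronwall's inequality applied to the linearized equation, which for additive noise contains no stochastic integral. The differences are in execution. You apply Gronwall pathwise for each fixed $(x,\omega)$, giving a deterministic bound and avoiding the a priori finiteness of $t\mapsto\int_\Omega\sup_x|D\Phi(x,t,\omega)|\,\d\P(\omega)$ that the paper's in-expectation Gronwall step tacitly needs; you also carry out the induction for $r>1$ (with the correct initial value $D^j\Phi(x,0)=0$) and justify pathwise differentiability via $Y=X-\bar B\bar W$, both of which the paper leaves implicit.
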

\begin{proof}
We only prove the case $r=1$. The case $r>1$ can be obtained similarly.
Notice that
\begin{align}
&\int_{\Omega}\sup_{x\in\R^{d}}\vert D\Phi(x,1,\omega)\vert\d\P(\omega)\notag\\
\leq &1+\int_{\Omega}\sup_{x\in\R^{d}}\vert\int_{0}^{1}DA(\Phi(x,s,\omega))D\Phi(x,s,\omega)\d s\vert\d\P(\omega)\notag\\
\leq &1+K\int_{0}^{1}\int_{\Omega}\sup_{x\in\R^{d}}\vert D\Phi(x,s,\omega)\vert\d\P(\omega)\d s,\notag
\end{align}
where $K:=\sup_{x\in\R^{d}}\vert DA(x)\vert$.
Then by the Gronwall inequality, we get
\begin{align}
\int_{\Omega}\sup_{x\in\R^{d}}\vert D\Phi(x,1,\omega)\vert\d\P(\omega)\leq e^{K}.\notag
\end{align}
The proof is complete.
\end{proof}

\begin{theorem}\label{the.4.1}
For equations \eqref{equ.4.2} and \eqref{equ.4.4}, assume that $A,A_{k}$ are $C^{\infty}$ functions on $\R^{d}$ with bounded derivatives of all orders. Suppose that there is a uniform Lyapunov function for \eqref{equ.4.2} and \eqref{equ.4.4}, and $\bar{B}\bar{B}^{\intercal}, \bar{B}_{k}\bar{B}_{k}^{\intercal}$ are invertible. If
\begin{align}
\lim\limits_{k\rightarrow\infty}\Vert A_{k}-A\Vert_{C^{0}}+\vert \bar{B}_{k}-\bar{B}\vert=0,\notag
\end{align}
then the measure-theoretic entropy is upper semicontinuous:
\begin{align}
\limsup\limits_{k\rightarrow\infty}h_{\mu_{k}}(\Phi_{k})\leq h_{\mu}(\Phi).\notag
\end{align}
\end{theorem}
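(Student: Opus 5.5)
The plan is to reduce Theorem~\ref{the.4.1} to Theorem~\ref{the.4.2}, viewing \eqref{equ.4.2} and \eqref{equ.4.4} as special cases of \eqref{m-dim} and \eqref{m-dimk} with constant diffusion matrices $\bar{B}(x)\equiv\bar{B}$ and $\bar{B}_{k}(x)\equiv\bar{B}_{k}$. It then suffices to check each hypothesis of Theorem~\ref{the.4.2} in this setting.

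\textbf{Step 1: smoothness, well-posedness and convergence of coefficients.} Constant matrices are $C^{\infty}$ with all derivatives zero, hence bounded. Together with the assumption on $A,A_{k}$, all coefficients are $C^{\infty}$ with bounded derivatives of all orders. In particular they are globally Lipschitz with linear growth, so unique solutions $\Phi,\Phi_{k}$ exist, as required in Proposition~\ref{pro.4.2}. Moreover, $\Vert\bar{B}_{k}-\bar{B}\Vert_{C^{0}}$ is just the column norm $\vert\bar{B}_{k}-\bar{B}\vert$, so the hypothesis $\Vert A_{k}-A\Vert_{C^{0}}+\vert\bar{B}_{k}-\bar{B}\vert\to0$ is exactly the convergence condition of Theorem~\ref{the.4.2}.

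\textbf{Step 2: uniform Lyapunov function and non-degeneracy.} The uniform Lyapunov function is assumed directly. Since $\bar{B}\bar{B}^{\intercal}$ and $\bar{B}_{k}\bar{B}_{k}^{\intercal}$ are invertible everywhere, they are in particular invertible on $U$. Proposition~\ref{pro.4.2} therefore gives unique invariant measures $\mu,\mu_{k}$ with $\mu_{k}\to\mu$ weak$^{\ast}$; the pointwise convergence it requires follows from the uniform convergence in Step~1.

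\textbf{Step 3: the integrability condition \eqref{pro.3.1}.} This is the only step with real content. Apply the preceding lemma to each \eqref{equ.4.4}, with $A$ replaced by $A_{k}$. It gives $\int_{\Omega}\Vert D\Phi_{k}(\omega)\Vert_{C^{r-1}}\d\P<\infty$ for every $r\geq1$. Since $\ln^{+}y\leq \max\{1,y\}\leq 1+y$, we get $\int_{\Omega}\ln^{+}\Vert D\Phi_{k}(\omega)\Vert_{C^{r-1}}\d\P<\infty$.

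The main obstacle is that the lemma was only written out for $r=1$. For general $r$ I would argue by induction on the order $n$ of the derivative, using the equation for $D^{n}\Phi$. With additive noise this equation is a random ODE with no stochastic integral, so the estimate can be done pathwise for each $\omega$ and uniformly in $x$. The linear term is bounded by $K\vert D^{n}\Phi\vert$. The inhomogeneity $P_{n}$ is a polynomial in $D\Phi,\dots,D^{n-1}\Phi$, which are bounded uniformly in $(x,s,\omega)$ by the induction hypothesis, with coefficients controlled by $\Vert A\Vert_{C^{n}}$. Gronwall's inequality then gives a deterministic bound for $\sup_{x}\vert D^{n}\Phi(x,1,\omega)\vert$, which is trivially integrable.

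With all hypotheses verified, Theorem~\ref{the.4.2} (equivalently Theorem~\ref{the.3.1} combined with the remark on $m$-dimensional noise) yields $\limsup_{k\to\infty}h_{\mu_{k}}(\Phi_{k})\leq h_{\mu}(\Phi)$.
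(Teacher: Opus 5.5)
Your proposal is correct and follows the paper's intended argument. The additive-noise lemma verifies the integrability condition \eqref{pro.3.1} for each $\Phi_k$. Theorem~\ref{the.4.2} (that is, Theorem~\ref{the.3.1} combined with Proposition~\ref{pro.4.2}) then applies, since constant diffusion matrices trivially satisfy the smoothness, invertibility-on-$U$ and $C^{0}$-convergence hypotheses. Your pathwise induction on the derivative order is a sound way to fill in the case $r>1$, which the paper only says follows ``similarly''; it works because the variational equations are random ODEs with deterministic Gronwall bounds.
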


\section*{Acknowledgements}
This work is supported by National Key R\&D Program of China (No. 2023YFA1009200), NSFC (Grants 12531009 and 11925102), and Liaoning Revitalization Talents Program (Grant XLYC2202042).

\end{document}